\newtheorem{theorem}{Theorem}[section]
\newtheorem{lemma}[theorem]{Lemma}
\newtheorem*{lemma*}{Lemma}
\newtheorem{proposition}[theorem]{Proposition}
\newtheorem{step}{Step}
\theoremstyle{definition}
\newtheorem*{definition}{Definition}
\newtheorem*{convention}{Convention}
\theoremstyle{remark}
\newtheorem*{remark}{Remark}
\begin{document}

\title{On the definition of homological critical value}
\author{Dejan Govc}
\address{Artificial Intelligence Laboratory, Jo\v{z}ef Stefan Institute\\
Jamova 39, 1000 Ljubljana, Slovenia}
\email{dejan.govc@gmail.com}
\date{\today}

\begin{abstract}
We point out that there is a problem with the definition of homological critical value (as defined in the widely cited paper \cite{stability} by Cohen-Steiner, Edelsbrunner and Harer). Under that definition, the critical value lemma of \cite{stability} in fact fails. We provide several counterexamples and a definition (due to Bubenik and Scott \cite{categorification}) we feel should be preferred and under which the critical value lemma does indeed hold. One of the counterexamples we have found is a height function on a compact smooth manifold. In the end we prove that, despite all this, a modified version of the critical value lemma remains valid under the original definition.
\end{abstract}

\maketitle

\section{Introduction}

As already noticed by Bubenik and Scott in \cite{categorification}, the definition of homological critical value given by Cohen-Steiner, Edelsbrunner and Harer in \cite{stability} is problematic. To provide some justification for their own version of the definition, Bubenik and Scott \cite[footnote of Definition 4.3]{categorification} present a counterexample to the critical value lemma of \cite{stability} under the original definition of \cite{stability}. Their counterexample, however, is a discontinuous function. The aim of this paper is to show that the definition is problematic also in the continuous (or even smooth) setting.

We first show that under the definition of homological critical value in \cite{stability}, the critical value lemma of \cite{stability} in fact fails: in the second section, we present some simple counterexamples with increasingly nice properties. Next, we show that there is a compact smooth manifold for which the critical value lemma fails. This requires some proving, thus we have devoted the whole third section to this example. In the fourth section we present a version of the critical value lemma that holds even under the problematic definition.

\subsection{Definitions and conventions}

We need to carefully distinguish between two definitions of homological critical value. This requires us to establish some new terminology. We first describe the currently accepted terminology, and then suggest the changes in terminology we believe are necessary.

In \cite{stability}, the authors define homological critical value as follows:

\begin{definition}
Let $X$ be a topological space and $f$ a real function on $X$. A {\em homological critical value} of $f$ is a real number $a$ for which there exists an integer $k$ such that for all sufficiently small $\epsilon>0$ the map $H_k(f^{-1}(-\infty,a-\epsilon])\to H_k(f^{-1}(-\infty,a+\epsilon])$ induced by inclusion is not an isomorphism.
\end{definition}

Here $H_k$ denotes the $k$-th singular homology (possibly with coefficients in a field, as is usually assumed in the context of persistence). The main purpose of this definition is to establish the critical value lemma, which says the following:

\begin{lemma*}
Suppose the function $f:X\to\mathbb R$ has no homological critical values on the closed interval $[x,y]$. Then the inclusion $f^{-1}(-\infty,x]\hookrightarrow f^{-1}(-\infty,y]$ induces isomorphisms on all homology groups.
\end{lemma*}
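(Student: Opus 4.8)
The natural plan is to prove the statement one homology degree at a time, using compactness of $[x,y]$ to reduce a global claim to a finite chain of local ones. Fix $k$ and abbreviate $X_t:=f^{-1}(-\infty,t]$, so that for $s\le t$ there is an inclusion $X_s\hookrightarrow X_t$ and hence a map $H_k(X_s)\to H_k(X_t)$. Since no $a\in[x,y]$ is a homological critical value, for each such $a$ there is some $\epsilon_a>0$ for which $H_k(X_{a-\epsilon})\to H_k(X_{a+\epsilon})$ is an isomorphism for appropriately small $\epsilon\in(0,\epsilon_a)$. The open intervals $(a-\epsilon_a,a+\epsilon_a)$, $a\in[x,y]$, cover the compact interval $[x,y]$; pass to a finite subcover, order its centres, pick points in the pairwise overlaps, and so obtain a partition $x=t_0<t_1<\dots<t_n=y$ subordinate to it. If one can upgrade the local isomorphisms to isomorphisms $H_k(X_{t_{i-1}})\to H_k(X_{t_i})$ for each $i$, then composing them shows $H_k(X_x)\to H_k(X_y)$ is an isomorphism, and since $k$ was arbitrary we are done.

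The one robust tool available for the upgrade step is that singular homology commutes with directed colimits of spaces: a singular chain has compact support, $f(K)$ is bounded above for compact $K\subseteq f^{-1}(-\infty,a)$, and therefore $H_k\!\left(f^{-1}(-\infty,a)\right)=\varinjlim_{t<a}H_k(X_t)$. This lets one replace the left endpoint $a-\epsilon$ of a good interval by a limit as $\epsilon\to 0^+$ and, more generally, describe $H_k(X_a)$ by the maps $H_k(X_t)\to H_k(X_a)$ with $t<a$ whenever the approach from below is under control. Combining this with the isomorphisms $H_k(X_{a-\epsilon})\to H_k(X_{a+\epsilon})$, one would try to propagate the isomorphism property across each partition point by a diagram chase in the ladder of inclusion-induced maps.

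The step I expect to be the real obstacle --- and, in view of what this paper sets out to show, presumably the one that cannot be carried out in general --- is bridging two overlapping good intervals, i.e.\ dealing with a sublevel set $X_a$ that lies strictly inside both $(a-\epsilon_a,a+\epsilon_a)$ and a neighbouring good interval. The hypothesis constrains only the composite $H_k(X_{a-\epsilon})\to H_k(X_{a+\epsilon})$, and a composite of maps in a commuting ladder can be an isomorphism without any of the intermediate maps being one; the obvious fix, namely pinning down $H_k(X_a)$ itself, is not available, because $X_a=\bigcap_{\epsilon>0}X_{a+\epsilon}$ and homology does not commute with such intersections, nor need $H_k\!\left(f^{-1}(-\infty,a)\right)\to H_k(X_a)$ be an isomorphism. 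A further wrinkle is that the negation of the definition only supplies, for each $k$, arbitrarily small $\epsilon$ with $H_k(X_{a-\epsilon})\to H_k(X_{a+\epsilon})$ an isomorphism --- not all sufficiently small $\epsilon$ --- so even the local input is weaker than the chaining argument would like. I therefore anticipate that the naive proof has a genuine gap at the bridging step, that closing it requires either strengthening the notion of critical value so as to control the maps into and out of $H_k(X_a)$ (the variant attributed to Bubenik and Scott) or imposing a tameness/finiteness hypothesis on the family $\{X_t\}$, and that without such an assumption one should look for a counterexample rather than a proof.
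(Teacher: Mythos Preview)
Your analysis is correct and matches the paper's conclusions precisely. The paper does \emph{not} prove this lemma; its whole point is that, under the original (``symmetric'') definition of homological critical value --- the one whose negation gives only the symmetric isomorphisms $H_k(X_{a-\epsilon})\to H_k(X_{a+\epsilon})$ you work with --- the lemma is \emph{false}. The paper then devotes Sections~2 and~3 to constructing the counterexamples you predicted should exist, culminating in a height function on a compact smooth $2$-manifold. Your diagnosis of the obstruction is exactly right: the symmetric hypothesis gives no control over $H_k(X_a)$ itself, so the chain cannot be bridged at a partition point, and (as you also noted) the negation of the CEH definition only yields arbitrarily small good $\epsilon$, not all sufficiently small ones.

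Under the Bubenik--Scott definition that the paper adopts as the preferred meaning of ``homological critical value'', the bridging difficulty disappears entirely: regularity at $a$ now says that \emph{every} inclusion $X_s\hookrightarrow X_t$ with $s<t$ in $(a-\epsilon_a,a+\epsilon_a)$ induces isomorphisms, so after choosing a partition subordinate to a finite subcover each map $H_k(X_{t_{i-1}})\to H_k(X_{t_i})$ is an isomorphism directly, with no diagram chase needed. The paper does not spell this out but simply cites Bubenik and Scott for it. Your directed-colimit idea is not needed for this version, but it is precisely the tool the paper uses in Section~4 to salvage a positive result (for \emph{open} sublevel sets) under the weaker symmetric hypothesis.
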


As mentioned, under the above definition, the critical value lemma is in fact false. This problem is solved easily by replacing the above definition with the following one, suggested by Bubenik and Scott \cite[footnote of Definition 4.3]{categorification}, which we state in a slightly less general form as follows:

\begin{definition}
Let $X$ be a topological space and $f$ a real function on $X$. A real number $a$ is a {\em homological regular value} of the function $f$ if there exists an $\epsilon>0$ such that for each pair of real numbers $x<y$ on the interval $(a-\epsilon,a+\epsilon)$, the inclusion $f^{-1}(-\infty,x]\hookrightarrow f^{-1}(-\infty,y]$ induces isomorphisms on all homological groups. A real number $a$ that is not a homological regular value of $f$ is called a {\em homological critical value} of $f$.
\end{definition}

For this definition, the critical value lemma does indeed hold, as Bubenik and Scott show (in a more general form) in \cite[Lemma 4.4]{categorification}. This leaves us with two non-equivalent definitions of homological critical value. To distinguish between the two, we adopt the following

\begin{convention} Homological critical values from the first (problematic) definition \cite{stability} will be called {\em symmetric homological critical values} of $f$. Homological critical values from the second definition will simply be called {\em homological critical values}, i.e. we accept the second definition as the preferred one. A real number that is not a symmetric homological critical value of $f$ will be called a {\em symmetric homological regular value}.
\end{convention}

\section{Some simple counterexamples}

We will now show that under the symmetric definition of homological critical value, the critical value lemma fails.

\subsection{A continuous counterexample}

The following counterexample is inspired by the discontinuous counterexample given by Bubenik and Scott in \cite{categorification}. Define a topological space $X\subseteq\mathbb R^2$ (equipped with the subspace topology) as follows:
\begin{equation*}
X = \{0\}\times[-1,1]\cup(0,1)\times(0,1]\cup\{1\}\times[0,1]
\end{equation*}

(Throughout this paper, we adopt the convention that $\times$ binds more strongly than $\cup$.) Define a function $f:X\to\mathbb R$ on this space by the formula $f(x,y)=y$, i.e. the height function. The various sublevel sets $f^{-1}(-\infty,a]$ look as follows. (Note that $X$ looks exactly like the sublevel set pictured for $a>0$, but with a square instead of rectangle.)

\hbox{}

\begin{center}
\includegraphics{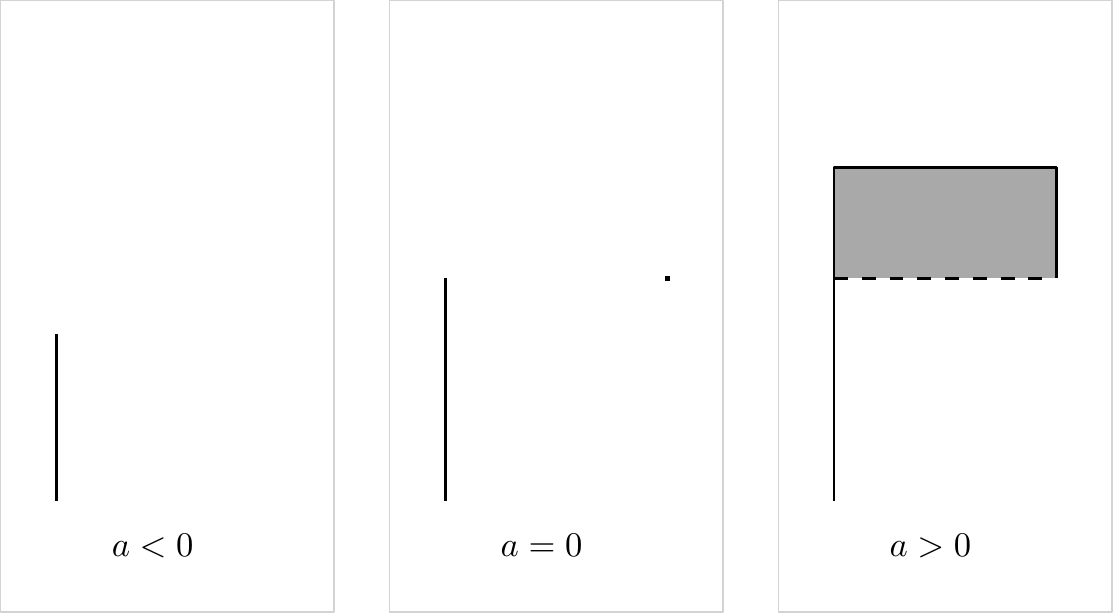}
\end{center}

We claim that $f:X\to\mathbb R$ has no symmetric homological critical values in the interval $(-1,1)$. To see this, first notice that $f^{-1}(-\infty,a]$ is contractible for all $a\in[-1,1]\setminus\{0\}$. This implies that every $a\in(-1,0)\cup(0,1)$ has a neighborhood $(a-\delta,a+\delta)$ such that for $\epsilon\in(0,\delta)$ the induced homomorphisms on the $k$-th homology groups are trivial for all $k\geq 1$. Since the corresponding homology groups are also trivial, they are isomorphisms. Furthermore, the induced homomorphisms on zeroth homology groups are isomorphisms as well, because the underlying spaces are path-connected.

For $a=0$ the situation is similar: taking any $\epsilon\in(0,1)$, the spaces $f^{-1}(-\infty,-\epsilon]$ and $f^{-1}(-\infty,\epsilon]$ are contractible and thus the inclusion $f^{-1}(-\infty,-\epsilon]\hookrightarrow f^{-1}(-\infty,\epsilon]$ induces isomorphisms on all homology groups. 

This shows that there are indeed no symmetric homological critical values in $(-1,1)$. Therefore there are no symmetric homological critical values in $[-\frac12,0]$ either. Despite this, $f^{-1}(-\infty,-\frac12]$ has one path component, while $f^{-1}(-\infty,0]$ has two, implying that their zeroth homology groups cannot be isomorphic. This contradicts the critical value lemma as presented in \cite{stability}.

Note that we can make the critical value lemma fail on higher homology groups just as easily. For example, it fails (by an analogous argument) on $H_1$ if we define $X\subseteq\mathbb R^3$ (using, of course, the natural identification of $\mathbb R^3$ with $\mathbb R^2\times\mathbb R$) by
\begin{equation*}
X = \{(-1,0)\}\times[-1,1]\cup S^1\times[0,1]\cup D^2\times(0,1]
\end{equation*}
where $S^1,D^2\subseteq\mathbb{R}^2$ denote the unit circle and the closed unit disk, respectively. The function $f:X\to\mathbb R$ for which the critical value lemma fails is again a height function, this time defined by $f(x,y,z)=z$.

\subsection{A compact counterexample}

A natural question that arises after seeing the counterexample just presented is: can we make the underlying topological space compact? The answer is: indeed, we can. To see this, let
\begin{equation*}
K=\left\{\left(x,\sin\frac{\pi}x\right)\big|\;x\in(0,1]\right\}\cup\{0\}\times[-1,1]\subseteq\mathbb R^2
\end{equation*}
be the usual topologist's sine curve. Now, for $r\in[0,1]$, define
\begin{equation*}
K_r:=K\cup [0,r]\times[-1,1]
\end{equation*}
and let 
\begin{equation*}
X=\{(1,0)\}\times[-1,0]\cup\bigcup_{r\in[0,1]} K_r\times\{r\}\subseteq\mathbb R^3.
\end{equation*}
(Via the canonical identification $\mathbb R^2\times\mathbb R\equiv\mathbb R^3$.) As in the previous example, let $f:X\to\mathbb R$ be the height function defined by $f(x,y,z)=z$. Now, examine the sublevel set $f^{-1}(-\infty,a]$. For negative $a$, this is just the interval $\{(1,0)\}\times[-1,a]$, which is clearly contractible. For positive $a$, we get
\begin{equation*}
\{(1,0)\}\times[-1,0]\cup\bigcup_{r\in[0,a]} K_r\times\{r\}
\end{equation*}
which has an obvious deformation retraction to the ``top level'' $K_a\times\{a\}$, which is again contractible and thus, so is the sublevel set. For $a=0$, on the other hand, we get
\begin{equation*}
\{(1,0)\}\times[-1,0]\cup K\times\{0\}
\end{equation*}
which again deformation retracts to the top level, but this time the top level is a topologist's sine curve, which has two path components. As in the previous example, all this (combined with the fact that there are no symmetric homological critical values, which follows immediately from what we have just seen) means that $X$ is a counterexample to the critical value lemma. (There are again many easy ways to modify this so that the critical value lemma fails for higher homology groups: for example, rotating the topologist's sine curve around its endpoint (or end interval) and doing an analogous construction makes the critical value lemma fail for $H_1$.)

The sublevel sets of $f$ are a bit hard to draw, so instead we provide some pictures of level sets (which are precisely the horizontal slices of $X$) as seen from the top. The level sets for negative $a$ are singletons. Note that the level sets increase (in the sense of inclusion as subsets of the plane, i.e. if we forget the third coordinate) as $a$ increases. $X$ consists of these level sets stacked on top of each other. (Which gives us an easy way of visualizing it.)

\begin{center}
\includegraphics[trim= 0pt 30pt 0pt 30pt]{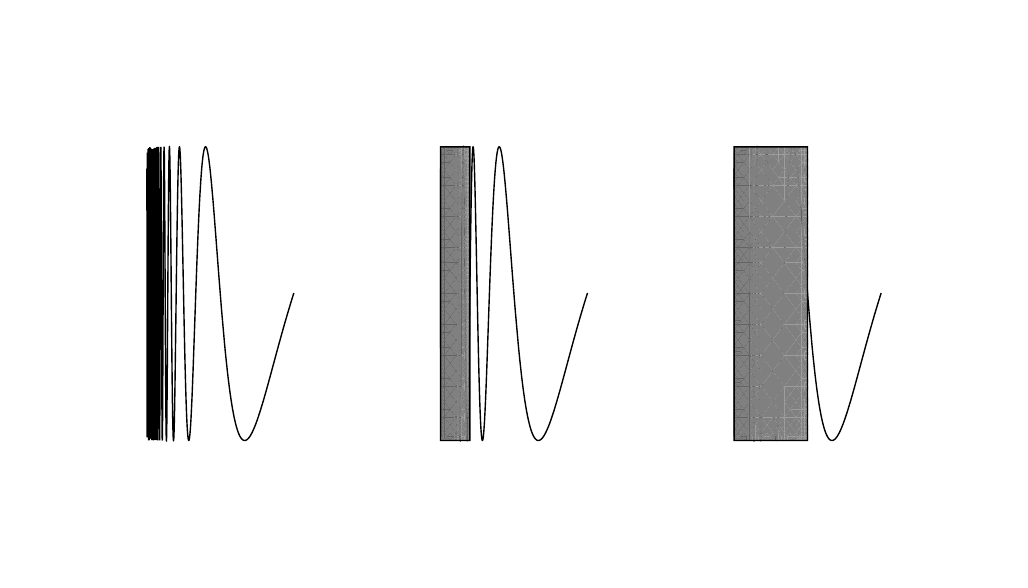}
\end{center}

Note that we could also have taken $K_r=\{(x,y)\in\mathbb R|\;d((x,y),K)\leq r\}$ and the same construction would work, basically for the same reasons. (One of the key features of such $K_r$ is that it is a contractible set for $r>0$; for the proof of this, follow the same idea as for the contractibility of sublevel sets in the next example.)

\subsection{A topological manifold counterexample}

Next, we present a topological manifold for which the critical value lemma fails under the symmetric definition. Take the topologist's sine curve $K$ from the previous example and add a square to it: $L=K\cup[-2,0]\times[-1,1]$. Now define the signed distance function (see \cite{signed}) $f:\mathbb R^2\to\mathbb R$ as follows
\begin{equation*}
f(x,y)=\begin{cases}-d((x,y),L^C);& x\in L\\
d((x,y),L);& x\notin L.
\end{cases}
\end{equation*}

\pagebreak
The set $L$ looks as follows:

\begin{center}
\includegraphics[trim= 0pt 30pt 30pt 30pt]{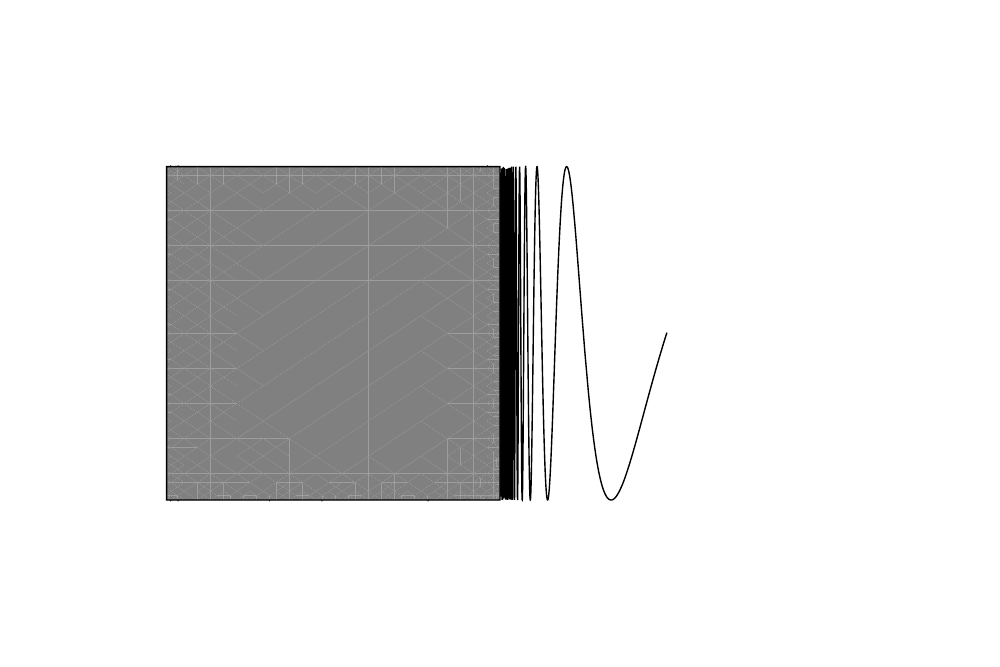}
\end{center}

This is also the sublevel set for $a=0$. The sublevel sets for $a<0$ are smaller squares inside $[-2,0]\times[-1,1]$ and the sublevel sets for $a>0$ are thickened versions of $L$.

The function $f$ is clearly continuous. Sublevel sets $f^{-1}(-\infty,a]$ are contractible for negative and positive $a$. (For the positive ones, we will prove this in a moment.) For $a=0$, the sublevel set is $L$, which has two connected components. By once again the same argument as before, this means we have found another counterexample. If we want the function to be a height function, we instead examine the graph of $f$, i.e. the set
\begin{equation*}
\Gamma_f=\{(x,y,f(x,y))|\;x,y\in\mathbb R\}\subseteq\mathbb R^3
\end{equation*}
and the function $F:\Gamma_f\to\mathbb R$ given by $F(x,y,z)=z$.

Note that $(x,y)\mapsto(x,y,f(x,y))$ is a homeomorphism from $\mathbb R^2$ to $\Gamma_f$, which shows that $\Gamma_f$ is a topological manifold. Furthermore, this homeomorphism takes the sublevel sets of $f$ to sublevel sets of $F$, which is why this remains a counterexample.

Now, the justification we promised. To prove that $f^{-1}(-\infty,a]$ is contractible for $a>0$, we need a lemma.

\begin{lemma} \label{kontraktibilnost}
Let $X\subseteq\mathbb R^n$ be contractible. Let $Y=\bigcup_{x\in X}\{x\}\times I_x$, where for each $x$, $I_x$ is a non-empty (but possibly degenerate) interval. Suppose there exists a continuous function $s:X\to\mathbb R$ such that the graph $\Gamma_s=\{(x,s(x))|\;x\in X\}$ is a subset of $Y$. Then $Y$ is contractible.
\end{lemma}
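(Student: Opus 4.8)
The plan is to show that $Y$ strong-deformation retracts onto the graph $\Gamma_s$, and then to observe that $\Gamma_s$ is homeomorphic to $X$ and hence contractible; since contractibility is a homotopy invariant, this yields the claim.

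First I would write down the retracting homotopy explicitly. For $(x,t)\in Y$ (so that $t\in I_x$) and $\tau\in[0,1]$, set $H((x,t),\tau)=(x,(1-\tau)t+\tau s(x))$. The one point that needs checking is that $H$ actually takes values in $Y$: since $\Gamma_s\subseteq Y$ we have $s(x)\in I_x$, and $I_x$ is an interval, hence convex, so the whole segment joining $t$ and $s(x)$ lies in $I_x$; in particular $(1-\tau)t+\tau s(x)\in I_x$ and thus $H((x,t),\tau)\in\{x\}\times I_x\subseteq Y$. Continuity of $H$ is immediate from continuity of $s$, viewing $H$ first as a map into $\mathbb R^{n+1}$ and then restricting the codomain to $Y$.

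Next I would read off the properties of $H$: we have $H(\cdot,0)=\mathrm{id}_Y$; the map $H(\cdot,1)$ has image $\Gamma_s$ and is a retraction onto it; and $H((x,s(x)),\tau)=(x,s(x))$ for all $\tau$, so $\Gamma_s$ is in fact a \emph{strong} deformation retract of $Y$. Consequently $Y\simeq\Gamma_s$. Finally, $x\mapsto(x,s(x))$ is a continuous bijection $X\to\Gamma_s$ whose inverse is the restriction of the projection onto the first $n$ coordinates, hence a homeomorphism; since $X$ is contractible, so is $\Gamma_s$, and therefore so is $Y$.

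I do not expect a serious obstacle here: essentially the entire content is the verification that the vertical straight-line homotopy never leaves $Y$, which is precisely where the hypotheses that each $I_x$ is an interval (rather than an arbitrary subset of $\mathbb R$) and that a continuous section $s$ with $\Gamma_s\subseteq Y$ exists are used. It is worth noting in passing that non-emptiness (and hence also the ``degenerate interval'' case) of the $I_x$ is already forced by $\Gamma_s\subseteq Y$, so the substantive hypotheses are really just ``each $I_x$ is an interval'' and ``there is a continuous section into $Y$''.
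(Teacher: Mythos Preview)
Your proposal is correct and follows essentially the same approach as the paper: both define the vertical straight-line homotopy $((x,y),\tau)\mapsto(x,(1-\tau)y+\tau s(x))$, observe it lands in $Y$ because each $I_x$ is an interval containing $s(x)$, and conclude by noting that $\Gamma_s$ is homeomorphic to $X$. Your write-up is simply more detailed (explicit verification of well-definedness, continuity, and the homeomorphism $X\to\Gamma_s$), and your closing remark that non-emptiness of the $I_x$ is already implied by $\Gamma_s\subseteq Y$ is a nice observation not made in the paper.
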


\begin{proof}
Since the ``vertical slices'' $\{x\}\times I_x$ of $Y$ are intervals, the formula $R((x,y),t)=(x,(1-t)y+t s(x))$ defines a map $R:Y\times I\to Y$. This map is a deformation retraction of $Y$ to $\Gamma_s$. The latter is homeomorphic to $X$, and thus contractible.
\end{proof}

This lemma will also be useful to us in the next section. Both times, we use it for the case where $X$ is an interval.

The lemma tells us that to show that the sublevel sets for $a>0$ are contractible, we only need to show that their vertical slices are intervals and exhibit an appropriate continuous function $s$. Before doing this, we establish another lemma.

\begin{lemma} \label{intervali}
Suppose $f,g:X\to\mathbb R$ are continuous functions, such that $f\leq g$ and $X$ is a connected topological space. Then
\begin{equation*}
J=\bigcup_{w\in X}[f(w),g(w)]
\end{equation*}
is an interval.
\end{lemma}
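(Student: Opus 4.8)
The plan is to verify that $J$ is a convex subset of $\mathbb{R}$; since the convex subsets of $\mathbb{R}$ are exactly its intervals, this suffices. Concretely, I would fix real numbers $a<c<b$ with $a,b\in J$ and show that $c\in J$ as well.

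The device I would use is to rewrite the condition ``$t\in J$'' in terms of level sets. For $t\in\mathbb{R}$ put $A_t=f^{-1}((-\infty,t])$ and $B_t=g^{-1}([t,\infty))$; these are closed in $X$ because $f$ and $g$ are continuous, and by definition $t\in J$ holds if and only if $A_t\cap B_t\neq\emptyset$. The point where the hypothesis $f\le g$ enters is the observation that $A_t\cup B_t=X$ for every $t$: if $f(w)>t$ then $g(w)\ge f(w)>t$, so $w\in B_t$. Note also that $A_t$ grows and $B_t$ shrinks as $t$ increases.

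Now I would put the pieces together. From $a\in J$ we get $A_a\neq\emptyset$, hence $A_c\supseteq A_a\neq\emptyset$ since $a<c$; symmetrically, $b\in J$ gives $B_b\neq\emptyset$, hence $B_c\supseteq B_b\neq\emptyset$ since $c<b$. So $A_c$ and $B_c$ are non-empty closed subsets of $X$ whose union is all of $X$. If they were disjoint they would constitute a separation of $X$ into two non-empty clopen pieces, contradicting the hypothesis that $X$ is connected; therefore $A_c\cap B_c\neq\emptyset$, i.e. $c\in J$, which is what we wanted.

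The argument is short once the right sets are written down, so I do not anticipate a serious obstacle; the one spot to be careful about is that the covering identity $A_t\cup B_t=X$ genuinely requires $f\le g$ (without that hypothesis the statement is false), and one should check that everything reads correctly in degenerate cases such as $X$ a single point or $J$ a singleton. If one prefers to avoid the clopen/connectedness argument, an equivalent route is a short case analysis: a witness $w_1$ for $a\in J$ satisfies $f(w_1)\le a<c$ and a witness $w_2$ for $b\in J$ satisfies $c<b\le g(w_2)$; if $g(w_1)\ge c$ or $f(w_2)\le c$ we are immediately done, and otherwise $f(w_1)<c<f(w_2)$, so the intermediate value theorem (valid since $X$ is connected and $f$ continuous) yields $w$ with $f(w)=c\le g(w)$, again giving $c\in J$.
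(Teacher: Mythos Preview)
Your argument is correct. The clopen-cover device is clean: the identifications $t\in J\iff A_t\cap B_t\neq\emptyset$ and $A_t\cup B_t=X$ (the latter using $f\le g$) are exactly right, and the connectedness of $X$ then forces $A_c\cap B_c\neq\emptyset$. Your alternative via the intermediate value theorem also works.

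The paper takes a different route: rather than proving convexity point by point, it observes that $f(X)$ and $g(X)$ are intervals (continuous images of a connected space) contained in $J$, and that for any fixed $w_0$ the interval $[f(w_0),g(w_0)]\subseteq J$ links them; from this it reads off the sandwich $(\inf f(X),\sup g(X))\subseteq J\subseteq\overline{(\inf f(X),\sup g(X))}$. Your approach has the virtue of being a direct verification of convexity with no need to identify the endpoints, and it makes the role of the hypothesis $f\le g$ completely transparent (it is exactly what guarantees $A_t\cup B_t=X$). The paper's approach, on the other hand, yields extra information for free, namely an explicit description of $J$ up to endpoints.
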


\begin{proof}
Clearly, $f(X)\subseteq J$ and $g(X)\subseteq J$. Since $f$ and $g$ are continuous functions and $X$ is connected, these are intervals. Now let $w\in X$ be arbitrary. Then $[f(w),g(w)]\subseteq J$ and $f(w)\in f(X)$ and $g(w)\in g(X)$. This implies that
\begin{equation*}
(\inf f(X),\sup g(X))\subseteq J\subseteq\overline{(\inf f(X),\sup g(X))}
\end{equation*}
Thus, indeed, $J$ is an interval.
\end{proof}

Now we can prove

\begin{proposition}
Suppose $a>0$ and let $f$ be the function in the example we have just presented. Then
\begin{equation*}
L_a:=f^{-1}(-\infty,a]=\{(x,y)\in\mathbb R|\; d((x,y),L)\leq a\}
\end{equation*}
is contractible.
\end{proposition}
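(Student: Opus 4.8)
The plan is to apply Lemma~\ref{kontraktibilnost}, taking $X$ to be the orthogonal projection of $L_a$ onto the first coordinate axis and the vertical slices $I_x=L_a\cap(\{x\}\times\mathbb R)$ as the fibres, so that $L_a=\bigcup_{x\in X}\{x\}\times I_x$. Three things then have to be checked: that $X$ is the closed interval $[-2-a,1+a]$ (hence contractible, and in particular that every fibre is non-empty); that each slice $I_x$ is an interval; and that there is a continuous section $s\colon[-2-a,1+a]\to\mathbb R$ with $\Gamma_s\subseteq L_a$. The first point is immediate: writing $L=Q\cup G$ with $Q=[-2,0]\times[-1,1]$ and $G=\{(u,\sin(\pi/u)):u\in(0,1]\}$, the first coordinates of points of $L$ fill the interval $[-2,1]$, and $x\in X$ precisely when $x$ is within distance $a$ of $[-2,1]$.

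For the slices, fix $x$ and let $P=[x-a,x+a]\times\mathbb R$ be the corresponding vertical strip. A real number $y$ lies in $I_x$ iff some $(u,v)\in L$ satisfies $(x-u)^2+(y-v)^2\le a^2$; this forces $(u,v)\in L\cap P$, and then $y$ must lie in $[v-\rho(u),v+\rho(u)]$ with $\rho(u)=\sqrt{a^2-(x-u)^2}$, while conversely every such $y$ works. Hence $I_x=\bigcup_{(u,v)\in L\cap P}[v-\rho(u),v+\rho(u)]$, where the two bounding functions are continuous on $L\cap P$ and the lower never exceeds the upper. By Lemma~\ref{intervali} it is therefore enough to show that $L\cap P$ is connected, and this is exactly where the topologist's sine curve enters. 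Depending on where the strip $P$ sits, $L\cap P$ is a closed rectangle (a piece of $Q$), or an arc of the graph of $u\mapsto\sin(\pi/u)$ (a piece of $G$), or the union of a rectangle containing $\{0\}\times[-1,1]$ with a piece $\{(u,\sin(\pi/u)):0<u\le c\}$ of $G$ whose closure also contains $\{0\}\times[-1,1]$; in every case it is connected, using in the last case that a connected set together with a set whose closure meets it is connected.

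It remains to produce the section $s$, and this is the step I expect to require the most care. The naive guesses fail: $s\equiv 0$ breaks down precisely over those $x$ for which $(x,0)$ is too far from every zero $1/k$ of $\sin(\pi/x)$ (which happens once $a$ is small), while $s(x)=\sin(\pi/x)$ on $(0,1]$ does not extend continuously across $x=0$, and crudely gluing the two produces a jump. The remedy is to switch between them at a zero of the sine curve: choose a positive integer $N$ with $1/N\le a$, and set $s(x)=0$ for $x\in[-2-a,1/N]\cup[1,1+a]$ and $s(x)=\sin(\pi/x)$ for $x\in[1/N,1]$. This $s$ is continuous, since at $x=1/N$ and at $x=1$ the value $\sin(\pi/x)$ is again $0$; and $\Gamma_s\subseteq L_a$, since $(x,0)$ lies within distance $a$ of $(0,0)\in L$ when $x\in[0,1/N]$, lies in $Q$ when $x\in[-2,0]$, lies within distance $a$ of $(-2,0)\in L$ when $x\in[-2-a,-2]$, and lies within distance $a$ of $(1,0)\in L$ when $x\in[1,1+a]$, while $(x,\sin(\pi/x))$ itself belongs to $L$ when $x\in[1/N,1]$. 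With $X$, the slices $I_x$ and the section $s$ all in hand, Lemma~\ref{kontraktibilnost} gives that $L_a$ is contractible.
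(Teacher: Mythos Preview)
Your proof is correct and follows essentially the same strategy as the paper's: show that each vertical slice is an interval via Lemma~\ref{intervali} and the connectedness of $L\cap P$ (which you spell out where the paper leaves it as an exercise), then apply Lemma~\ref{kontraktibilnost} with an explicit continuous section. Your section, which switches from the constant $0$ to $\sin(\pi/x)$ at a zero $1/N\le a$ of the sine rather than at $x=a$, is a minor variant of the paper's---and in fact has the small advantage of being well-defined for every $a>0$, whereas the paper's piecewise formula tacitly assumes $a\le 1$.
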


\begin{proof}
First, we show that for any $b\in\mathbb R$, the set $L_a\cap(\{b\}\times\mathbb R)$ is an interval. (Possibly empty or degenerate.) To see this, note that $(b,w)\in\bar B((x,y),a)\cap(\{b\}\times\mathbb R)$ (where by $\bar B(x,r)$ we denote the closed ball of radius $r$ centered at $x$ and $B(x,r)$ is the corresponding open ball) if and only if
\begin{equation*}
y-\sqrt{a^2-(x-b)^2}\leq w\leq y+\sqrt{a^2-(x-b)^2}
\end{equation*}
as an elementary calculation shows. Noting that this square root is only defined for $x\in[b-a,b+a]$, this means that
\begin{equation*}
L_a\cap(\{b\}\times\mathbb R) = \bigcup_{(x,y)\in A}\{b\}\times\left[y-\sqrt{a^2-(x-b)^2},y+\sqrt{a^2-(x-b)^2}\right]
\end{equation*}
where $A=L\cap([b-a,b+a]\times\mathbb R)$.

But $A$ is connected (by an easy exercise) and $(x,y)\to y-\sqrt{a^2-(x-b)^2}$ and $(x,y)\to y+\sqrt{a^2-(x-b)^2}$ are continuous functions on $A$. Therefore, by Lemma \ref{intervali}, $L_a\cap(\{b\}\times\mathbb R)$ is an interval.

To complete our justification, note that $L_a\subseteq[-2-a,1+a]\times\mathbb R$ and for every $b\in[-2-a,1+a]$, the intersection $L_a\cap(\{b\}\times\mathbb R)$ is non-empty. We can therefore define the following continuous function $s:[-2-a,1+a]\to\mathbb R$:
\begin{equation*}
s(x)=\begin{cases}0;&x\geq 1\\
\sin\frac{\pi}x;&x\in[a,1]\\
\sin\frac{\pi}a;&x\leq a.
\end{cases}
\end{equation*}
and the graph $\Gamma_s$ is a subset of $L_a$. The proof is complete by Lemma \ref{kontraktibilnost}.
\end{proof}

We might further want to compactify our example to show that the critical value lemma may fail for compact topological manifolds, but as the next section shows an even stronger result, we omit this.

\section{A compact smooth counterexample}

In this section, we construct a compact smooth manifold embedded in $\mathbb R^4$ which, together with its height function, provides us with yet another counterexample to the critical value lemma. First, we examine the function $f:\mathbb R^2\to\mathbb R$ defined by
\begin{equation*}
f(x,y)=\begin{cases}e^{-\frac1{x^2}}(y-\sin\frac{\pi}x)^2;&x\neq 0\\
0;&x=0.
\end{cases}
\end{equation*}
This is a smooth function (for the proof, see Appendix). It is of interest to us because of its sublevel sets: $f^{-1}(-\infty,0]$ is a connected set with three path components. (It is just a slightly modified version of the topologist's sine curve.) In fact, the level sets of $f$ can all be expressed explicitly. We leave this as an easy exercise for the reader and only state the results here:

\begin{itemize}
\item for $a>0$ we have
\begin{equation*}
f(x,y) = a \iff y = \sin\frac\pi x\pm\sqrt{a}e^{\frac{1}{2x^2}},
\end{equation*}
\item for $a=0$
\begin{equation*}
f(x,y) = 0 \iff y = \sin\frac\pi x \text{ or } x=0,
\end{equation*}
\item and for $a<0$ the level set is empty.
\end{itemize}

We should mention that here and throughout this paper, we consider an equation of the form $y = F(x)$ to be false for points $x$ where $F(x)$ is undefined. In the present case, this only affects the point $x=0$, where $\sin\frac{\pi}x$ is undefined (and is the reason for treating the case $a = 0$ separately). This also gives us the sublevel sets:

\begin{itemize}
\item for $a>0$ we have
\begin{equation*}
f(x,y)\leq a \iff \sin\frac\pi x-\sqrt{a}e^{\frac{1}{2x^2}} \leq y \leq \sin\frac\pi x+\sqrt{a}e^{\frac{1}{2x^2}} \textrm{ or } x=0,
\end{equation*}
which might be written more compactly as
\begin{equation*}
f(x,y)\leq a \iff \left|y - \sin\frac\pi x\right|\leq \sqrt{a}e^{\frac{1}{2x^2}} \textrm{ or } x=0,
\end{equation*}
\item for $a=0$
\begin{equation*}
f(x,y)\leq 0 \iff y = \sin\frac\pi x \textrm{ or } x = 0,
\end{equation*}
\item and for $a<0$ the sublevel set is empty.
\end{itemize}

This is very close to what we are looking for: for $a>0$ the sublevel sets are contractible and the sublevel set for $a=0$ is not contractible. The only things missing are contractible sublevel sets for $a<0$ and compactness of sublevel sets. (Which will later help us make our example compact.) We need to modify $f$ a bit. The following function $\vartheta:\mathbb R\to\mathbb R$ is helpful in doing this:
\begin{equation*}
\vartheta(x)=\begin{cases} e^{-\frac1{x^2}};&x>0\\
0;&x\leq 0.
\end{cases}
\end{equation*}
It is a standard result that this is a smooth function (see for example \cite{lee}; for the basic idea also see Appendix, where we prove that $x\mapsto e^{-\frac1{x^2}}\sin\frac{\pi}x$ is smooth). We now modify $f$ as follows: define $g:\mathbb R^2\to\mathbb R$ by
\begin{equation*}
g(x,y) = f(x,y) - \vartheta\left(\frac14-(x-1)^2\right) + x^2\vartheta(x^2-4)
\end{equation*}
Here, the first correction term gives us the contractible sublevel sets for $a<0$ we wanted (without destroying nice properties we already have) and the second correction term (which conveniently grows to infinity as $x\to\pm\infty$) takes us one step closer to compactness, since the sublevel sets now become bounded in one direction. We will show that this is so in a moment. When we are done showing this, we will introduce a third (and last) correction term, which will cause the sublevel sets to become bounded in the other direction. (And thus compact by Heine-Borel, since the sublevel sets of a continuous function are closed.)

All of what we just said follows from the fact that the sublevel sets can again be written explicitly. The calculation is basically the same as for $f$, so we again leave it to the reader. The results are:

\begin{itemize}
\item for $a\neq 0$ we have
\begin{equation*}
g(x,y) = a \iff y = \sin\frac\pi x\pm e^{\frac{1}{2x^2}}\sqrt{a+\vartheta\left(\frac14-(x-1)^2\right)-x^2\vartheta(x^2-4)},
\end{equation*}
\item and for $a=0$
\begin{equation*}
g(x,y) = 0 \iff y = \sin\frac\pi x\pm e^{\frac{1}{2x^2}}\sqrt{\vartheta\left(\frac14-(x-1)^2\right)-x^2\vartheta(x^2-4)} \text{ or } x=0.
\end{equation*}
\end{itemize}

The results are a bit more complicated than for $f$, so we have to say a few words about them. To understand what these equations tell us, we have to understand where the appropriate expressions are defined.  In particular, the square root is only defined for non-negative arguments. (Note also that the problem with $x=0$ we mentioned when studying the sublevel sets of $f$ remains the same.) This tells us that the equations above only make sense for
\begin{equation*}
a+\vartheta\left(\frac14-(x-1)^2\right)-x^2\vartheta(x^2-4)\geq 0.
\end{equation*}

First, we consider the case $a\geq 0$. In this case, the inequality clearly holds for $x\in[-2,2]$. (As $x^2\vartheta(x^2-4)$ is zero there.) For $x\notin[-2,2]$ the inequality is equivalent to $x^2\vartheta(x^2-4)\leq a$. This inequality is satisfied if and only if $x\in[-\tau(a),\tau(a)]$, where $\tau(a)=\max\{x\in\mathbb R|\; x^2\theta(x^2-4)\leq a\}$. Note that the function $\tau:[0,\infty)\to\mathbb R$ is strictly increasing and $\tau(0)=2$. To sum up: the level sets for $a>0$ are defined (by the above equations) for $x\in[-\tau(a),\tau(a)]\setminus\{0\}$ and the level set for $a = 0$ is defined (by the above equations) for $x\in[-2,2]$.

The remaining case is $a<0$. In this case, the inequality is clearly false for $x\notin(\frac12,\frac32)$. For $x\in(\frac12,\frac32)$ it is equivalent to $a\geq -e^{-\frac{1}{(\frac14-(x-1)^2)^2}}$. As it turns out (an easy exercise which we leave to the reader), for $a\geq-e^{-16}$, this inequality is equivalent to $x\in[1-\sigma(a),1+\sigma(a)]$, where $\sigma(a) = \sqrt{\frac14 - \sqrt{-\frac1{\log(-a)}}}$, which is defined if and only if $a\in[-e^{-16},0)$.

For the sake of completeness, we note that for $a<-e^{-16}$, the inequality implies $-e^{-16}>e^{-\frac{1}{(\frac14-(x-1)^2)^2}}$ which is equivalent to $(\frac14-(x-1)^2)^2>\frac1{16}$. This clearly has no solutions in $(\frac12,\frac32)$. This means that for $a<-e^{-16}$, the level (and sublevel) sets are empty.

The sublevel sets of $g$ are therefore given by:

\begin{itemize}
\item if $a\geq 0$
\begin{equation*}
g(x,y)\leq a \iff \left|y - \sin\frac\pi x\right| \leq e^{\frac{1}{2x^2}}\sqrt{a+\vartheta\left(\frac14-(x-1)^2\right)-x^2\vartheta(x^2-4)} \textrm{ or } x=0,
\end{equation*}
\item if $-e^{-16}\leq a<0$
\begin{equation*}
g(x,y)\leq a\iff \left|y - \sin\frac\pi x\right| \leq e^{\frac{1}{2x^2}}\sqrt{a+\vartheta\left(\frac14-(x-1)^2\right)-x^2\vartheta(x^2-4)},
\end{equation*}
\item empty for $a<-e^{-16}$.
\end{itemize}

In other words, the sublevel sets are the areas bounded by the corresponding level sets. The main point of all this is of course that the sublevel sets retain all the properties we need (namely, they are contractible for all $a\in[-e^{-16},\infty)$, except for $a=0$).

A picture is worth a thousand words, so we present some Mathematica plots of the sublevel sets of $g$.

\begin{itemize}
\item For $a=1$:

\begin{center}
\includegraphics[width=250pt]{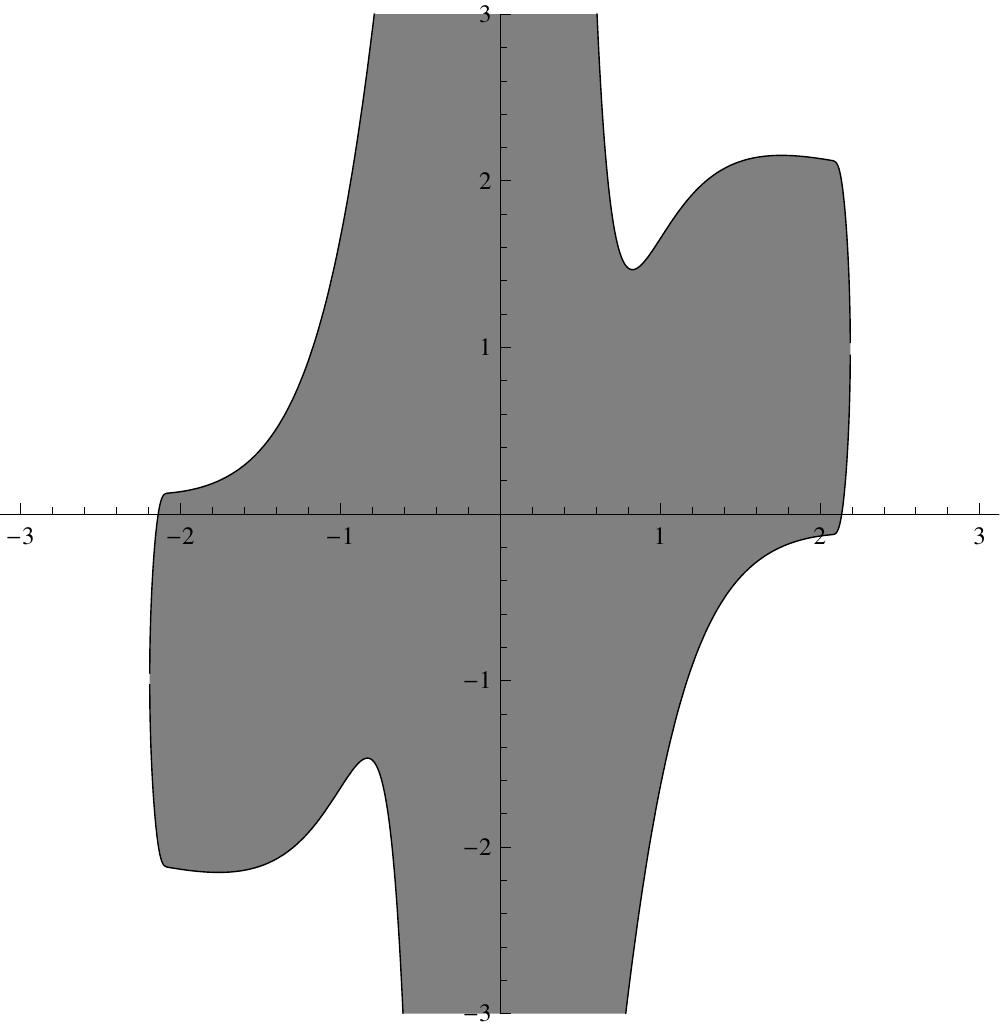}
\end{center}
\item For $a=0.01$:

\begin{center}
\includegraphics[width=250pt]{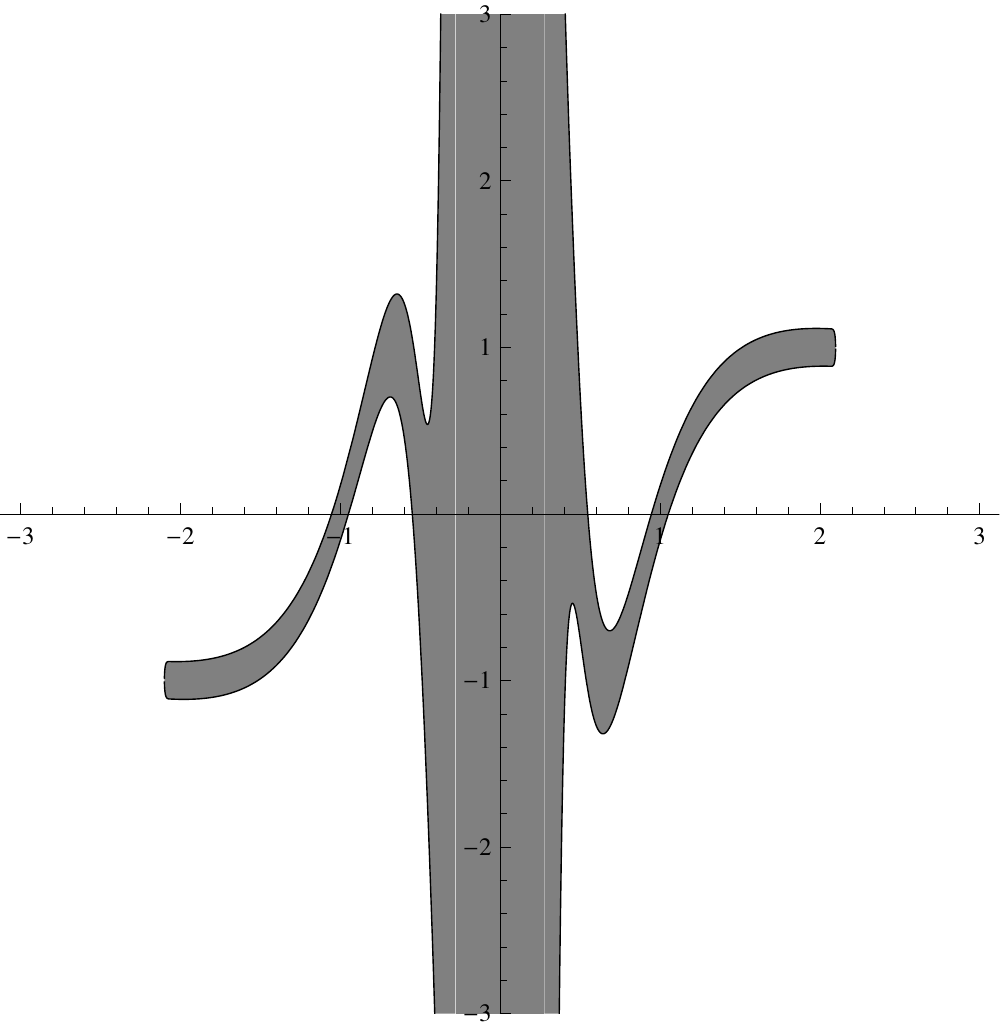}
\end{center}

\item For $a=0$; this one looks like a curve. That's a bit misleading though, since it actually contains a tiny bulge (a topological disk) in the area $(\frac12,\frac32)\times\mathbb R$. This disk is invisible to the naked eye, though. (Actually, the previous two examples also contain invisible bulges, but in that case, these don't affect the topology. So, even though a picture is worth a thousand words, some of those words might be lies.) The bulges mentioned are invisible because $\vartheta(\frac14-(x-1)^2)$ has very small values and thus causes only a tiny perturbation on the appropriate sublevel set of $f$ (which is indeed a curve for $x\in(\frac12,\frac32)$):

\begin{center}
\includegraphics[width=250pt]{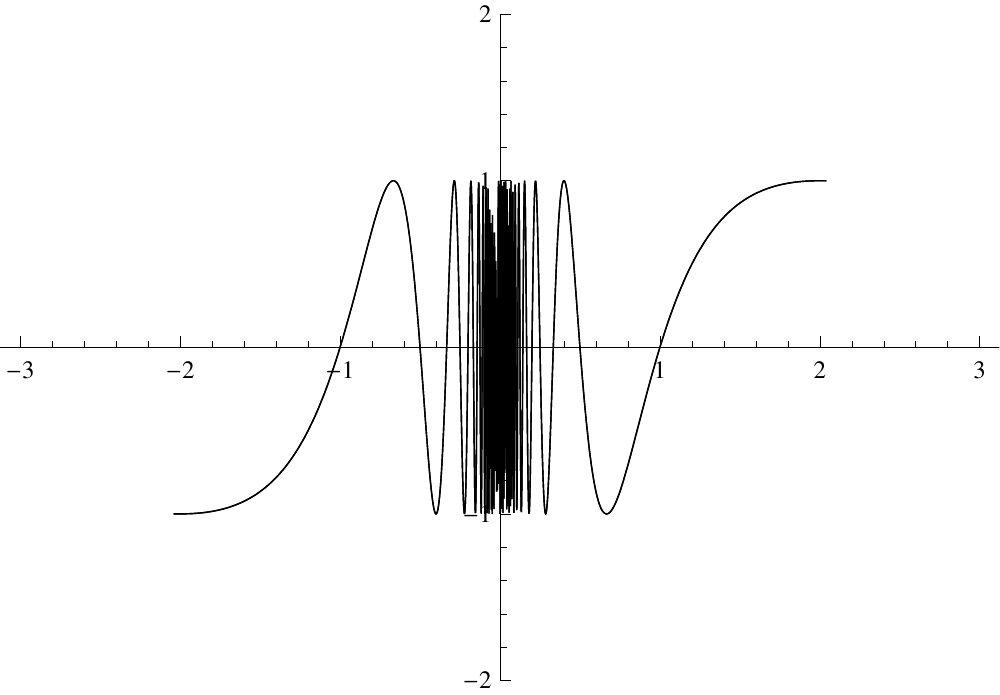}
\end{center}

\item For $a<0$; this one should be completely invisible to the naked eye. The following plot is a sublevel set for $a<0$ where instead of the correction term $\vartheta(\frac14-(x-1)^2)$ we subtracted $10^7\vartheta(\frac14-(x-1)^2)$. This isn't a sublevel set of $g$, but we hope it should convey to the reader the same general idea about how the correction term works:

\begin{center}
\includegraphics[width=250pt]{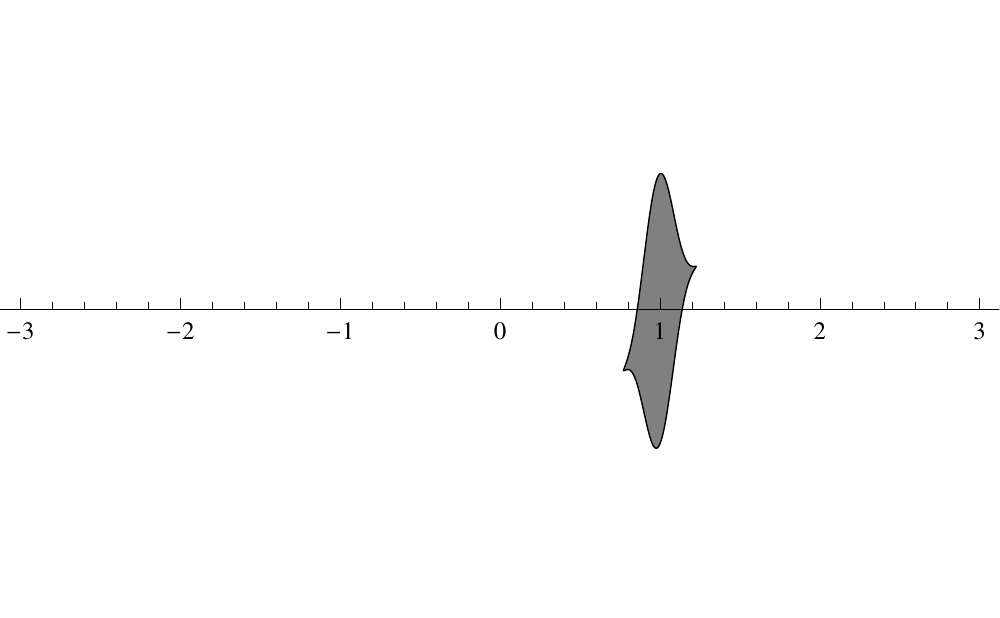}
\end{center}
\end{itemize}

Now we introduce the final correction term. We define the function $h:\mathbb R^2\to\mathbb R$ by the formula
\begin{equation*}
h(x,y) = f(x,y) - \vartheta\left(\frac14-(x-1)^2\right) + x^2\vartheta(x^2-4) + y^2\vartheta(y^2-4)
\end{equation*}
Because of this last term, the function now ``diverges to infinity as $(x,y)$ approaches infinity''. This means that the sublevel sets are now bounded and thus compact by Heine-Borel. We only need to show now that they retain all the nice properties they had before.

This time, we won't be able to describe all the sublevel sets explicitly. Note, however, that the sublevel (and level) sets for $a<0$ remain unchanged, as the new correction term has support in $\mathbb R\times(\mathbb R\setminus(-2,2))$, which these sublevel sets don't intersect. The level set $h^{-1}(\{0\})$ also has a nice description:
\begin{equation*}
h(x,y) = 0 \iff y = \sin\frac\pi x\pm e^{\frac{1}{2x^2}}\sqrt{\vartheta\left(\frac14-(x-1)^2\right)-x^2\vartheta(x^2-4)} \text{ or } (x=0 \text{ and } y\in[-2,2]).
\end{equation*}
This is because the only points in $g^{-1}(\{0\})$ where the value doesn't remain $0$ are those in $\mathbb R\times(\mathbb R\setminus[-2,2])$. Here the value of $g$ was already non-negative (elementary calculation), and the correction term is strictly positive. Therefore positive values are assigned to every point in $\mathbb R\times(\mathbb R\setminus[-2,2])$. Again, we have to keep in mind that $\sqrt{\vartheta(\frac14-(x-1)^2)-x^2\vartheta(x^2-4)}$ only makes sense for $x\in[-2,2]$. From this we can also explicitly describe the sublevel set for $a=0$.

In any case, this tells us that for $-e^{-16}\leq a<0$ the set $h^{-1}(-\infty,a]$ is contractible and that $h^{-1}(-\infty,0]$ has three (contractible) path components. (And a single connected component.)

The level sets for $a>0$ are a bit harder to describe. Before plunging into proofs, we again provide some pictures. The parameters are the same as for $g$, so that the reader may compare the differences. (Note that we left out the sublevel sets for non-positive values of $a$, since these would tell us nothing new.)
\begin{itemize}
\item For $a=1$:

\begin{center}
\includegraphics[width=250pt]{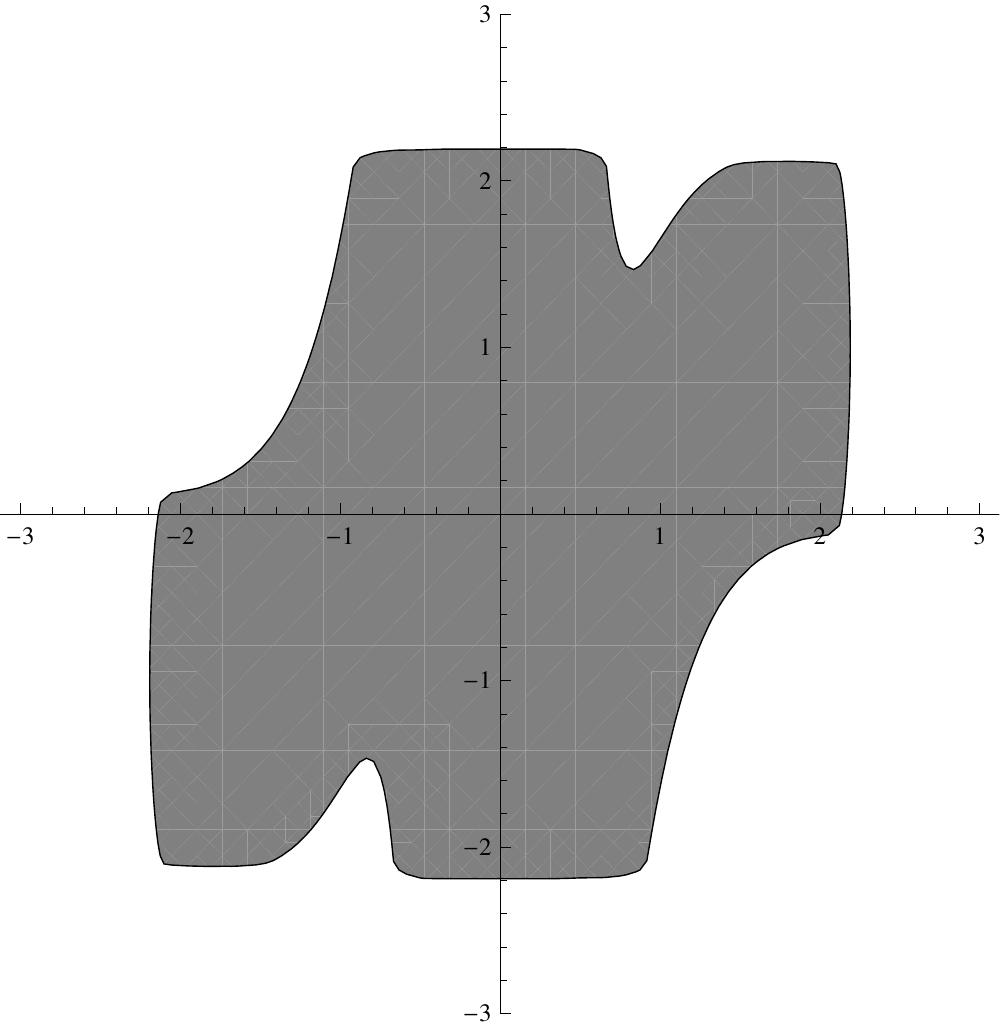}
\end{center}

\item For $a=0.01$:

\begin{center}
\includegraphics[width=250pt]{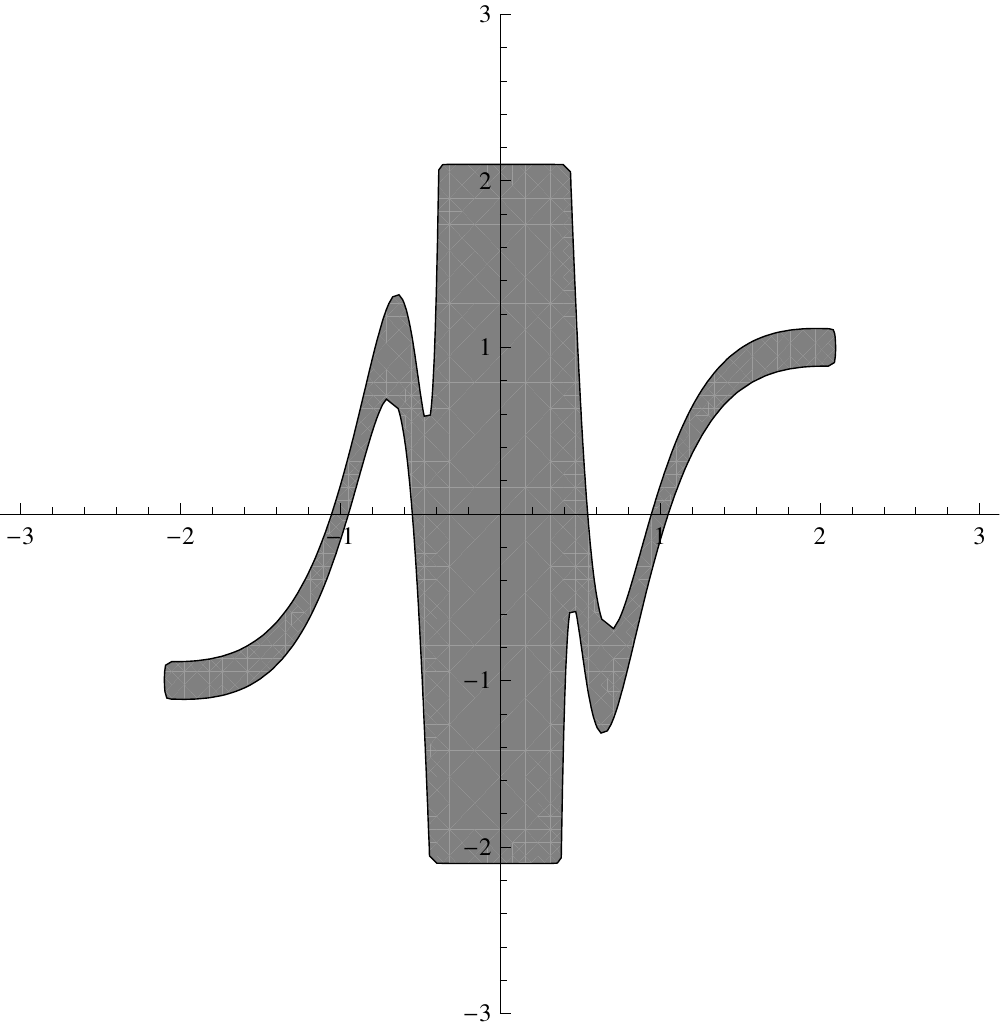}
\end{center}
\end{itemize}

We establish contractibility of $h^{-1}(-\infty,a]$ in two propositions.

\begin{proposition}
Suppose $a>0$. Then the vertical slices $(\{x\}\times\mathbb R)\cap h^{-1}(-\infty,a]$ are of the form $\{x\}\times I_x$, where $I_x$ is a (possibly empty or degenerate) interval. The set $X=\{x\in\mathbb R|\;I_x \text{ is non-empty}\}$ is also an interval.
\end{proposition}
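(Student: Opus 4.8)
The plan is to show that for each fixed $x$ the function $y\mapsto h(x,y)$ is unimodal --- non-increasing on some half-line $(-\infty,c(x)]$ and non-decreasing on $[c(x),\infty)$ --- so that its sublevel set $I_x$ is automatically an interval (possibly empty or a single point), and then to identify $X$ with a projection already computed in the discussion preceding this proposition. I would start by writing $h(x,y)=\phi_x(y)+C(x)+y^2\vartheta(y^2-4)$, where $\phi_x(y)=e^{-1/x^2}(y-\sin\frac\pi x)^2$ for $x\neq0$, $\phi_0\equiv0$, and $C(x)=-\vartheta(\frac14-(x-1)^2)+x^2\vartheta(x^2-4)$ depends only on $x$; and I set $c(x)=\sin\frac\pi x$ for $x\neq 0$ and $c(0)=0$, noting that $c(x)\in[-1,1]\subseteq[-2,2]$ in every case. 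Since $h$ is smooth, $h(x,\cdot)$ is differentiable and the sign of $\partial h/\partial y$ controls its monotonicity.

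The core of the argument is a sign analysis of $\partial h/\partial y$ on the four intervals $(-\infty,-2]$, $[-2,c(x)]$, $[c(x),2]$, $[2,\infty)$, which cover $\mathbb R$ because $c(x)\in[-2,2]$. On $[-2,2]$ the bump term $y^2\vartheta(y^2-4)$ vanishes identically, so there $\partial h/\partial y=2e^{-1/x^2}(y-\sin\frac\pi x)$ (and $\equiv 0$ when $x=0$), which is $\leq 0$ on $[-2,c(x)]$ and $\geq 0$ on $[c(x),2]$. On $[2,\infty)$ I would compute $\partial_y\bigl(y^2\vartheta(y^2-4)\bigr)=2y\,\vartheta(y^2-4)+2y^3\vartheta'(y^2-4)\geq0$, using $\vartheta\geq0$, $\vartheta$ non-decreasing (hence $\vartheta'\geq0$) and $y>0$; meanwhile $2e^{-1/x^2}(y-\sin\frac\pi x)>0$ because $y\geq 2>1\geq\sin\frac\pi x$. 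The symmetric computation handles $(-\infty,-2]$, where both summands are $\leq 0$. Piecing the four intervals together shows that $h(x,\cdot)$ is non-increasing on $(-\infty,c(x)]$ and non-decreasing on $[c(x),\infty)$; such a function has interval sublevel sets, so $I_x=\{y:h(x,y)\leq a\}$ is an interval.

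For the claim about $X$, the monotonicity just obtained gives $\min_y h(x,y)=h(x,c(x))$, and substituting $y=c(x)$ yields $h(x,c(x))=C(x)$ for every $x$, since $\phi_x(c(x))=0$ and $c(x)^2\leq 1<4$ make the bump term vanish. Hence $I_x\neq\emptyset$ exactly when $C(x)\leq a$, i.e. when $a+\vartheta(\frac14-(x-1)^2)-x^2\vartheta(x^2-4)\geq 0$. But this is precisely the condition under which the (sub)level sets of $g$ were shown above to be non-empty, and it was found there that for $a>0$ the set of such $x$ equals $[-\tau(a),\tau(a)]$. So $X=[-\tau(a),\tau(a)]$, which is an interval.

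The step I expect to be the real content is the unimodality of $h(x,\cdot)$: the point is to rule out a spurious second local minimum created by the bump $y^2\vartheta(y^2-4)$. The reason it cannot occur is that the vertex $\sin\frac\pi x$ of the parabolic part always lies in $[-1,1]$, comfortably inside the flat region $[-2,2]$ of the bump, so the bump only ``switches on'' where $\phi_x$ is already strictly monotone in the matching direction, and it switches on monotonically because $\vartheta$ is non-decreasing.
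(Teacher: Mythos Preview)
Your proposal is correct and follows essentially the same route as the paper: the paper also fixes $x$, argues that $y\mapsto h(x,y)$ is decreasing up to $\sin\frac{\pi}{x}$ and increasing thereafter (treating $x=0$ separately, where the minimum is attained on all of $[-2,2]$), and then identifies $X$ as the sublevel set of $\mu(x)=\min_y h(x,y)=-\vartheta(\tfrac14-(x-1)^2)+x^2\vartheta(x^2-4)$. The only cosmetic difference is that the paper concludes by observing directly that $\mu$ is decreasing for $x\leq 1$ and increasing for $x\geq 1$, whereas you invoke the earlier computation of $[-\tau(a),\tau(a)]$; these amount to the same thing.
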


\begin{proof}
We examine the behaviour of $h$ along the vertical lines $\{x\}\times\mathbb R$. To do this, it is convenient to define for each $x\in\mathbb R$ a function $h_x:\mathbb R\to\mathbb R$ by $h_x(y)=h(x,y)$. This is a smooth function. It is immediate from the definition of $h$ that $h_x$ achieves a global minimum at $y=\sin\frac\pi x$ for all $x\neq 0$. An elementary calculation verifies that for $x\neq 0$, the function $h_x$ is strictly increasing on $[\sin\frac\pi x,\infty)$ and strictly decreasing on $(-\infty,\sin\frac\pi x]$. (Note that here we rely on the fact that the last correction term is zero for $y\in[-2,2]$, which means that this term can be understood as an increasing function of $y$ on $[\sin\frac\pi x,\infty)$ and decreasing on $(-\infty,\sin\frac\pi x]$.)

For $x=0$, $h_x$ behaves a bit differently: $h_x(y)=y^2\vartheta(y^2-4)$ holds for all $y\in\mathbb R$. Thus it achieves its global minimum (zero) at every $y\in[-2,2]$. It is strictly increasing on $[2,\infty)$ and strictly decreasing on $(-\infty,-2]$.

The first part of the claim follows.

For the second part of the claim, note that $X$ contains precisely the points $x$ for which $\mu_x:=\min_{y\in\mathbb R} h_x(y)\leq a$, i.e. $X$ is precisely the sublevel set $\mu^{-1}(-\infty,a]$ of the function $\mu:x\mapsto\mu_x$. Now, note that
\begin{equation*}
\mu_x=-\vartheta\left(\frac14-(x-1)^2\right)+x^2\vartheta(x^2-4)
\end{equation*}
holds by the properties of $h_x$ we described above. Therefore $\mu$ is increasing for $x\geq 1$ and decreasing for $x\leq 1$ (both times non-strictly). Therefore the sublevel sets of $\mu$ are intervals. This completes the proof.
\end{proof}

We can now show that the sublevel sets for $a>0$ are contractible.

\begin{proposition}
For $a>0$, $h^{-1}(-\infty,a]$ is contractible.
\end{proposition}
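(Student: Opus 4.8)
The plan is to deduce this from Lemma~\ref{kontraktibilnost}. By the previous proposition we may write $Y := h^{-1}(-\infty,a] = \bigcup_{x\in X}\{x\}\times I_x$, where $X$ is an interval (hence contractible) and every $I_x$, $x\in X$, is a non-empty interval. So the only thing missing is a continuous function $s\colon X\to\mathbb R$ whose graph is contained in $Y$, i.e.\ with $s(x)\in I_x$ for all $x\in X$; once we have it, Lemma~\ref{kontraktibilnost} finishes the proof.

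For $x\neq 0$ the natural choice is $s(x)=\sin\frac\pi x$, the unique global minimiser of $h_x$: since $x\in X$ means $\mu_x\le a$, this point lies in $I_x$. The obstruction is that $\sin\frac\pi x$ has no continuous extension to $x=0$, so $s$ must be redefined near the origin. The key observation is that near $x=0$ the vertical slices $I_x$ are not pinched but, on the contrary, very large: the weight $e^{-1/x^2}$ in $f$ means that the half-width of $I_x$ contains a factor $e^{1/(2x^2)}$, which tends to $+\infty$ as $x\to 0$. Concretely, for $|x|$ below a small threshold $\delta$ the two correction terms $\vartheta(\frac14-(x-1)^2)$ and $x^2\vartheta(x^2-4)$ vanish identically, and (shrinking $\delta$ if necessary so that $e^{1/(2\delta^2)}\sqrt a\ge 3$) one gets $[-2,2]\subseteq I_x$ for all $x$ with $|x|\le\delta$, because the centre $\sin\frac\pi x$ of the relevant parabola lies in $[-1,1]$.

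With such a $\delta$ fixed (note $X=[-\tau(a),\tau(a)]$ with $\tau(a)\ge 2>\delta$, so $[-\delta,\delta]\subseteq X$), define $s(x)=\sin\frac\pi x$ for $\delta\le|x|\le\tau(a)$ and $s(x)=\frac x\delta\sin\frac\pi\delta$ for $|x|<\delta$. This $s$ is continuous — the two formulas agree at $x=\pm\delta$, using $\sin\frac{\pi}{-\delta}=-\sin\frac\pi\delta$ — and its graph lies in $Y$: for $\delta\le|x|\le\tau(a)$ this was noted above, while for $|x|<\delta$ we have $|s(x)|<1$, hence $s(x)\in[-2,2]\subseteq I_x$. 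Lemma~\ref{kontraktibilnost} then gives that $h^{-1}(-\infty,a]$ is contractible.

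The only delicate point is the behaviour near $x=0$: the naive section $s(x)=\sin\frac\pi x$ fails there, and what rescues the argument is the somewhat counterintuitive fact (a consequence of the $e^{-1/x^2}$ weight) that the sublevel set fattens rather than narrows as $x\to 0$. Verifying this — i.e.\ checking that the correction terms vanish for $|x|$ small and that the exponential factor eventually dominates — is the one place requiring a short explicit computation; everything else is formal.
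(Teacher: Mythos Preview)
Your argument is correct and follows essentially the same route as the paper: invoke the previous proposition, then build a continuous section $s$ equal to $\sin\frac{\pi}{x}$ away from the origin and linearly interpolated across a small neighbourhood of $0$ where the vertical slices are large, and finish with Lemma~\ref{kontraktibilnost}. The only difference is that the paper obtains this neighbourhood abstractly via the tube lemma (applied to $\{0\}\times[-2,2]\subseteq h^{-1}(-\infty,a)$), while you verify it by the explicit half-width computation; your symmetric interval $[-\delta,\delta]$ with the odd linear bridge is a harmless variant of the paper's (possibly asymmetric) interval $[\alpha,\beta]$.
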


\begin{proof}
First note that $h^{-1}(-\infty,a)$ is an open set containing $h^{-1}(-\infty,0]$. Therefore by the tube lemma (\cite{tube}), it must also contain a tube around $\{0\}\times[-2,2]$, i.e. an open set of the form $(\alpha,\beta)\times(\gamma,\delta)\supseteq\{0\}\times[-2,2]$. (This means, of course, that $\alpha<0<\beta$.) The previous proposition tells us that the sublevel set $h^{-1}(-\infty,a]$ is precisely $\bigcup_{x\in X}\{x\}\times I_x$, for some contractible space $X$. As for the topological manifold counterexample, contractibility will follow by defining a function $s:X\to\mathbb R$ as follows:
\begin{equation*}
s(x)=\begin{cases}\sin\frac\pi x;& x\notin(\alpha,\beta)\\
\sin\frac\pi \alpha +\frac{x-\alpha}{\beta-\alpha}(\sin\frac\pi \beta - \sin\frac\pi \alpha);& x\in[\alpha,\beta].
\end{cases}
\end{equation*}
This is a continuous function, and its graph $\Gamma_s=\{(x,s(x))|\;x\in X\}$ is a subset of $h^{-1}(-\infty,a]$. This is because $h(x,\sin\frac\pi x) =\mu_x\leq a$ for $x\in X\setminus\{0\}$ and for $x\in(\alpha,\beta)$, the point $(x,s(x))$ lies on the open line segment connecting the points $(\alpha,\sin\frac\pi\alpha)$ and $(\beta,\sin\frac\pi\beta)$ and thus in the tube $(\alpha,\beta)\times(\gamma,\delta)\subseteq h^{-1}(-\infty,a]$. By Lemma \ref{kontraktibilnost}, $h^{-1}(-\infty,a]$ is contractible.
\end{proof}

This combined with the contractibility of sublevel sets for $\-e^{-16}\leq a<0$ and non-contractibility for $a=0$ proves that $h:\mathbb R^2\to\mathbb R$ is indeed a counterexample to the critical value lemma.

Making the counterexample compact is easy now. Let $S^2 =\{(x,y,z)\in\mathbb R^3|\; x^2+y^2+z^2 = 1\}$. As we know, $S^2\setminus\{(0,0,1)\}$ is diffeomorphic to $\mathbb R^2$ via the stereographic projection $p:S^2\setminus\{(0,0,1)\}\to\mathbb R^2$. The set $K=(h\circ p)^{-1}(-\infty,1]$ is a compact subset of $S^2\setminus\{(0,0,1)\}$. Let $U\subseteq S^2$ be an open neighborhood of $(0,0,1)$ such that $\bar U\subseteq S^2\setminus K$. As we know from the theory of smooth manifolds (see \cite[Chapter 2]{lee} for example) there exists a smooth function $\eta:S^2\to\mathbb R$ such that $\eta(x,y,z) = 0$ for all $(x,y,z)\in U$ and $\eta(x,y,z)=1$ for $(x,y,z)\in K$. This means we can define a smooth function $H:S^2\to\mathbb R$ as follows:
\begin{equation*}
H(w)=\begin{cases}(h\circ p)(w)\eta(w)+(1-\eta(w));&w\neq(0,0,1),\\
1;&w=(0,0,1).
\end{cases}
\end{equation*}
Clearly, $H$ is a smooth function. It is smooth on $S^2\setminus\{(0,0,1)\}$ because $h,p$ and $\eta$ are smooth and it is smooth in $(0,0,1)$ because $H(w)=1$ for all $w\in U$. The sublevel sets of $H$ are diffeomorphic to those of $h$ for $a\in[-e^{-16},1)$. (Since in our construction of $H$ we only modified those values of $h\circ p$ that were greater than $1$ and the modified values remain greater than or equal to $1$.)

To make the counterexample a height function on a smooth manifold, we notice that $i:(x,y,z)\to(x,y,z,H(x,y,z))$ is a smooth embedding of $S^2$ into $\mathbb R^4$. We define the height function $F:i(S^2)\to\mathbb R$ by $F(x,y,z,w)=w$ and the sublevel sets of $F$ are diffeomorphic to those of $H$ (and $h$). Conclusion: we have found a compact smooth manifold embedded in $\mathbb R^4$ and a (smooth, of course) height function on it that contradicts the critical value lemma.

\section{Positive results}

To provide some contrast to all the counterexamples above, we devote this section to proving a positive result about symmetric homological critical values. As the result is a version of the critical value lemma valid for symmetric homological critical values, we suggest the name ``symmetric critical value lemma''.

Throughout this section, let $X$ denote an arbitrary topological space and let $f:X\to\mathbb R$ be an arbitrary continuous function on $X$. As we have a lot to say about sublevel sets, it is convenient to introduce some notation. We define for each $a\in\mathbb R$ the following subspaces of $X$:

\begin{itemize}
\item $X_a=f^{-1}(-\infty,a]$, which we refer to as the closed sublevel set of $f$ at $a$,
\item $X_a^-=f^{-1}(-\infty,a)$, which we refer to as the open sublevel set of $f$ at $a$.
\end{itemize}

To further simplify notation, we also introduce the following notations for the various inclusions of sublevel sets:

\begin{itemize}
\item $i_a^b:X_a\hookrightarrow X_b$,
\item $i_{a-}^b:X_a^-\hookrightarrow X_b$,
\item $i_a^{b-}:X_a\hookrightarrow X_b^-$,
\item $i_{a-}^{b-}:X_a^-\hookrightarrow X_b^-$,
\end{itemize}

whenever the space on the left is a subspace of the space on the right. Our result is the following.

\begin{theorem}
Suppose the function $f:X\to\mathbb R$ has no symmetric homological critical values on the interval $[x,y)$. Then the inclusion $X_x^-\hookrightarrow X_y^-$ induces isomorphisms on all homology groups.
\end{theorem}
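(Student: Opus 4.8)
The plan is to fix a homology degree $k$ and prove that $X_x^-\hookrightarrow X_y^-$ induces an isomorphism on $H_k$; since $k$ is arbitrary this gives the theorem. The basic tool is that singular homology commutes with directed unions along inclusions (every singular chain has compact, hence $f$-bounded, support), so that $H_k(X_t^-)=\varinjlim_{s<t}H_k(X_s)$ for every $t$, and in particular $H_k(X_y^-)=\varinjlim_{x\le t<y}H_k(X_t^-)$ because $\{t:x\le t<y\}$ is cofinal in $\{t:t<y\}$. Hence it is enough to show that
\[
S=\bigl\{\,t\in[x,y):\ X_x^-\hookrightarrow X_t^-\text{ induces an isomorphism on }H_k\,\bigr\}
\]
is all of $[x,y)$: then $H_k(X_y^-)$ is a colimit of a system of isomorphisms based at $H_k(X_x^-)$, and the inclusion-induced map $H_k(X_x^-)\to H_k(X_y^-)$ is the comparison map into that colimit.

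Since $x\in S$ and $[x,y)$ is connected, it suffices to prove $S$ is open and closed in $[x,y)$, and both reduce to the following local statement: for each $a\in[x,y)$ there is $\delta>0$ such that $X_s^-\hookrightarrow X_t^-$ induces an isomorphism on $H_k$ whenever $s\le t$ with $s,t\in(a-\delta,a+\delta)$ and $s\ge x$. (For $a\in(x,y)$ we shrink $\delta$ so that $(a-\delta,a+\delta)\subseteq(x,y)$; for $a=x$ only the half-neighborhood $[x,x+\delta)$ is involved, which suffices because closedness of $S$ at $x$ is automatic.) Granting this, openness of $S$ at $a\in S$ follows by composing $H_k(X_x^-)\xrightarrow{\ \sim\ }H_k(X_a^-)$ with the local isomorphisms $H_k(X_a^-)\to H_k(X_t^-)$ for $t\ge a$, and by cancelling through $H_k(X_a^-)$ for $t<a$; closedness at a limit point of $S$ in $[x,y)$ is obtained the same way after choosing a point of $S$ inside $(a-\delta,a+\delta)$.

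To prove the local statement put $M_u=H_k(X_u)$, so that $H_k(X_s^-)=\varinjlim_{u<s}M_u$. Let $a$ be a symmetric homological regular value witnessed by $\epsilon_0>0$, i.e.\ $M_{a-\epsilon}\to M_{a+\epsilon}$ is an isomorphism for $0<\epsilon<\epsilon_0$. Factoring this isomorphism through an intermediate level shows that the one-sided structure maps $M_{a-\epsilon}\to M_{a-\epsilon'}$ ($\epsilon>\epsilon'$) are monomorphisms and $M_{a+\epsilon'}\to M_{a+\epsilon}$ ($\epsilon'<\epsilon$) are epimorphisms. The heart of the matter is to upgrade these to \emph{isomorphisms}, and here one must use that every point of $[x,y)$ near $a$ is symmetric regular, not just $a$. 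On the right of $a$, choose $b\in(a,y)$ close to $a$; a secondary connectedness argument shows $M_u\to M_v$ is an isomorphism for all $a\le u\le v<b$: if $v$ is symmetric regular with parameter $\delta_v$ and $v'\in(v,v+\delta_v)$, the isomorphism $M_{2v-v'}\to M_{v'}$ coming from regularity of $v$, together with the epimorphism (or, at $v=a$, monomorphism) property of $M_{2v-v'}\to M_v$ and of $M_{v'}$'s own regularity, forces $M_v\to M_{v'}$ to be an isomorphism, and this local step makes $\{v\in[a,b):M_a\to M_v\text{ iso}\}$ clopen in $[a,b)$; when $a\in(x,y)$ the mirror argument (using the monomorphisms) does the same on the left of $a$. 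Transporting the left-side isomorphisms across $M_{a-\epsilon}\cong M_{a+\epsilon}$ then shows that every structure map $M_u\to M_v$ with $u\le v$ in the relevant neighborhood of $a$ is an isomorphism, so $H_k(X_s^-)=\varinjlim_{u<s}M_u$ is computed by a cofinal sub-system of isomorphisms; thus $H_k(X_s^-)\cong M_{u_0}$ for any $u_0$ slightly below $s$, compatibly in $s$, which is the local statement. (At $a=x$ one has $H_k(X_t^-)\cong M_{u_0}$ for $u_0\in(x,t)$ from the right-side isomorphisms, and since each $M_{x-\eta}\to H_k(X_t^-)$ is then an isomorphism, factoring it as $M_{x-\eta}\to H_k(X_x^-)\to H_k(X_t^-)$ forces $H_k(X_x^-)\to H_k(X_t^-)$ to be an isomorphism.)

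I expect the main obstacle to be precisely this upgrade from mono/epi to iso for the one-sided structure maps of $M_\bullet$. At an isolated symmetric regular value it genuinely fails; it holds here only because regularity is assumed on a whole interval — intuitively, a proper monomorphism $M_{a-\epsilon}\hookrightarrow M_{a-\epsilon'}$ would propagate, through the symmetric isomorphisms, into a failure of regularity just to the right of $a$. The left endpoint $x$ also needs separate handling, since no regular values of $f$ are available to its left. This half-open feature of the interval is unavoidable: the argument is run at $a=x$ and at points of $[x,y)$ arbitrarily close to $y$, but never at $y$ itself, which may legitimately be a symmetric homological critical value.
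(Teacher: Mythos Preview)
Your overall strategy---computing $H_k(X_t^-)$ as $\varinjlim_{u<t}H_k(X_u)$ and then running a connectedness argument on $[x,y)$---lives in the same circle of ideas as the paper, which also relies on direct limits (its Steps~3 and~4). The gap is in your ``secondary connectedness argument'': you assert that $M_u\to M_v$ is an isomorphism for all $a\le u\le v<b$, in particular that $M_a\to M_{v'}$ is an isomorphism for every $v'$ just above $a$. This is false, and the paper's own counterexamples refute it. In the first continuous example of Section~2.1 every point of $(-1,1)$ is a symmetric homological regular value, yet $M_0=H_0(X_0)\cong\mathbb Z^2$ while $M_\epsilon\cong\mathbb Z$ for all small $\epsilon\ne0$; thus $M_0\to M_\epsilon$ is not an isomorphism. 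Your ``local step'' at $v=a$ says that the monomorphism $M_{2a-v'}\to M_a$, the isomorphism $M_{2a-v'}\to M_{v'}$, and regularity at $v'$ together force $M_a\to M_{v'}$ to be an isomorphism; but the factorization $\mathbb Z\hookrightarrow\mathbb Z^2\twoheadrightarrow\mathbb Z$ in that very example satisfies all of these hypotheses while the middle-to-right map is a non-injective surjection. The phenomenon you have run into is precisely the subject of the paper: under symmetric regularity everywhere, an individual closed sublevel set $X_a$ can still misbehave, so your set $T$ can legitimately equal $\{a\}$ and the clopen argument collapses at the base point.

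The paper sidesteps this by never asserting that \emph{all} maps $M_u\to M_v$ near a point are isomorphisms. Its Step~1 is an overlap lemma (from isomorphisms $M_\alpha\to M_\beta$ and $M_\gamma\to M_\delta$ with $\alpha<\gamma<\beta<\delta$ one gets isomorphisms on the intermediate pieces), and Step~2 uses compactness of $[x,y-\delta]$ to manufacture \emph{specific} levels $x-\epsilon$ and $y-\eta$ with $M_{x-\epsilon}\to M_{y-\eta}$ an isomorphism. Steps~3 and~4 then pass to the limit along carefully chosen sequences $\eta_n\downarrow0$ and $\epsilon_n\downarrow0$. The essential point is that the argument only ever touches a cofinal \emph{sequence} of closed levels, silently skipping over levels (such as $0$ above) where $M_\bullet$ jumps. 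Your colimit framework is well suited to this repair, but the ``upgrade from mono/epi to iso'' for the closed-level maps must be replaced by an overlap/compactness argument that produces particular good levels rather than claiming every level in a neighborhood is good.
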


Note that this result holds for spaces that may be immensely more pathological than the spaces the usual critical value lemma is used to handle. In particular, the result holds for the counterexamples we have presented. We thus expect it to be useful in studying such pathological spaces.

We prove our result in four steps. Throughout the rest of this section, let $k$ be a fixed integer.

\begin{remark}
We will also assume throughout this section that the intervals $[a,b)$ we are working with are non-degenerate, i.e. $a<b$.
\end{remark}

We start with the following:

\begin{step} \label{one}
Suppose $a<c<b<d$ are real numbers such that the inclusions $i_a^b:X_a\hookrightarrow X_b$ and $i_c^d:X_c\hookrightarrow X_d$ induce isomorphisms on $H_k$. Then the inclusions $i_a^c,i_c^b$ and $i_b^d$ also induce isomorphisms on $H_k$. (A completely analogous result holds also for any inclusions of the forms $i_{a-}^b,i_a^{b-}$ and $i_{a-}^{b-}$.)
\end{step}

\begin{proof}
The homomorphism $(i_c^d)_*:H_k(X_c)\to H_k(X_d)$ induced by the inclusion $i_c^d$ is an isomorphism. Therefore, $(i_c^b)_*$ is a monomorphism and $(i_b^d)_*$ is an epimorphism. Likewise, because $(i_a^b)_*$ is an isomorphism, $(i_a^c)_*$ is a monomorphism and $(i_c^b)_*$ is an epimorphism. Summing up, $(i_c^b)_*$ is an isomorphism. This immediately implies that $(i_a^c)_*=(i_c^b)_*^{-1}(i_a^b)_*$ and $(i_b^d)_*=(i_c^d)_*(i_c^b)_*^{-1}$ are isomorphisms. (The proof for the other three kinds of inclusions is completely analogous and we leave it to the reader.)
\end{proof}

As the next step, we prove:

\begin{step}
Suppose an interval $[x,y)$ contains no symmetric homological critical values of $f:X\to\mathbb R$. Let $\delta\in(0,y-x)$. Then there exist real numbers $\epsilon,\eta\in(0,\delta)$ such that the homomorphism $(i_{x-\epsilon}^{y-\eta})_*$ is an isomorphism.
\end{step}

\begin{proof}
Since there are no symmetric homological critical values on $[x,y-\delta]$, we may choose for each $z\in[x,y-\delta]$ an $\epsilon(z)\in(0,\frac\delta 2)$ such that $(i_{z-\epsilon(z)}^{z+\epsilon(z)})_*$ is an isomorphism. The interval $[x,y-\delta]$, however, is compact, so we may choose $z_1,z_2,\ldots,z_n\in[x,y-\delta]$ for some $n\in\mathbb N$ such that
\begin{equation*}
[x,y-\delta]\subseteq\bigcup_{i=1}^n(z_i-\epsilon(z_i),z_i+\epsilon(z_i))\subseteq \left(x-\frac\delta 2,y-\frac\delta 2\right)
\end{equation*}
holds. The latter inclusion holds because $\epsilon(z)\in(0,\frac\delta 2)$ for all $z$. This basically completes the proof. From a $n$-element cover $\{(a_i,b_i)|\;i=1,2,\ldots,n\}$ satisfying
\begin{equation*}
[x,y-\delta]\subseteq\bigcup_{i=1}^n(a_i,b_i)\subseteq \left(x-\frac\delta 2,y-\frac\delta 2\right)
\end{equation*}
and where $n>1$ and $(i_{a_i}^{b_i})_*$ are isomorphisms, we may easily construct a $(n-1)$-element cover satisfying the same equation (where we replace $n$ with $n-1$, of course) such that the corresponding induced maps are again isomorphisms. To do this, simply choose two elements of this cover with non-empty intersection, say $(a,b)$ and $(c,d)$. If one is entirely contained in the other, throw it away and we are done. Otherwise, we may assume without loss of generality that $a<c$ and use the first step to see that $(i_a^d)_*$ is an isomorphism. Thus, replacing $(a,b)$ and $(c,d)$ with $(a,d)$ gives us the desired $(n-1)$-element cover. Repeating this procedure $n-1$ times, we are left with a cover containing a single element $(a_1,b_1)$. It satisfies the equation above, i.e. $[x,y-\delta]\subseteq(a_1,b_1)\subseteq (x-\frac\delta 2,y-\frac\delta 2)$ and $(i_{a_1}^{b_1})_*$ is an isomorphism. This means that we can define $\epsilon=x-a_1$ and $\eta=y-b_1$, thus concluding the proof.
\end{proof}

Now, we need a classical lemma.

\begin{lemma} \label{hatcher}
Suppose $X$ is a topological space such that $X = \bigcup_{n\in\mathbb N} Z_n$, where $(Z_n)_n$ is an increasing (i.e. $Z_n\subseteq Z_m$ for $n<m$) sequence of subsets of $X$ with the property that for every compact set $K\subseteq X$ there exist a $n\in\mathbb N$ such that $K\subseteq Z_n$. Then the canonical map $\varinjlim\limits_{n\in\mathbb N} H_i(Z_n)\to H_i(X)$ is an isomorphism for all $i\in\mathbb N_0$. (Here $\varinjlim\limits_{n\in\mathbb N} H_i(Z_n)$ is the direct limit of the direct system of homology groups and homomorphisms induced by inclusions.) In particular, the conclusion holds if the sets $Z_n$ are open.

In the special case when all the induced homomorphisms are isomorphisms, the conclusion is stronger: for each $j\in\mathbb N$ the direct limit $\varinjlim\limits_{n\in\mathbb N} H_i(Z_n)$ is canonically isomorphic to $H_i(Z_j)$ (together with the obvious maps) and the canonical isomorphism $H_i(Z_j)\to H_i(X)$ (guaranteed by the first part of the claim) is precisely the map induced by the inclusion.
\end{lemma}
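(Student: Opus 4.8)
The plan is to exploit the single fundamental fact that singular chains have compact support. For a singular $i$-chain $c=\sum_j a_j\sigma_j$ --- a finite formal sum of singular simplices $\sigma_j\colon\Delta^i\to X$ --- write $|c|=\bigcup_j\sigma_j(\Delta^i)$ for its support; since $\Delta^i$ is compact and the sum is finite, $|c|$ is a compact subset of $X$, hence by hypothesis $|c|\subseteq Z_n$ for some $n$, and then $c$ is the image of a chain carried by $Z_n$. Because the $Z_n$ are nested, the relation $\partial c=0$ (resp.\ $c=\partial c'$) holds in $X$ precisely when it already holds in $Z_n$ for $n$ large enough.

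With this in hand I would first check surjectivity of the canonical map $\varinjlim_n H_i(Z_n)\to H_i(X)$: a class in $H_i(X)$ is represented by a cycle $\alpha$, whose support lies in some $Z_n$; there $\alpha$ is already a cycle, so it defines a class in $H_i(Z_n)$ mapping to the given one. For injectivity, take a class in $\varinjlim_n H_i(Z_n)$ mapping to $0$ in $H_i(X)$ and represent it by a cycle $\alpha$ carried by some $Z_n$; then $\alpha=\partial\beta$ in $X$, the compact set $|\alpha|\cup|\beta|$ lies in some $Z_m$ with (after enlarging $m$) $m\ge n$, so $\alpha=\partial\beta$ already in $Z_m$ and the class dies in $H_i(Z_m)$, hence in the direct limit. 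This establishes the first assertion. The parenthetical ``in particular'' is immediate: if the $Z_n$ are open, a compact $K\subseteq X$ is covered by finitely many of them, and since they are nested, by a single one.

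For the last paragraph of the statement: when every inclusion-induced map $H_i(Z_n)\to H_i(Z_m)$ ($n\le m$) is an isomorphism, the direct system is ``essentially constant'', and it is a formal property of the colimit of such a system that for each $j$ the structure map $H_i(Z_j)\to\varinjlim_n H_i(Z_n)$ is itself an isomorphism --- this follows directly from the explicit construction of the direct limit as a quotient of $\bigoplus_n H_i(Z_n)$. Composing it with the canonical isomorphism $\varinjlim_n H_i(Z_n)\to H_i(X)$ of the first part yields an isomorphism $H_i(Z_j)\to H_i(X)$, and naturality of the canonical map identifies this composite with the homomorphism induced by the inclusion $Z_j\hookrightarrow X$.

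I expect no genuine obstacle here: this is a classical statement (found in standard algebraic topology references), and the whole content is the compact-support argument above together with routine bookkeeping with the colimit. The only point demanding a little care is keeping track of which of the countably many indices is ``large enough'' at each stage, so that a cycle and a chain bounding it can be pushed into a common $Z_m$.
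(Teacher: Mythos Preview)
Your proof is correct and follows essentially the same line as the paper's. The paper simply cites Hatcher's textbook (Proposition~3.33) for the first part --- your compact-support argument is precisely the content of that proposition --- and for the second part verifies the universal property of the direct limit for $H_i(Z_j)$ rather than appealing to the explicit construction of the colimit as you do, but the two arguments are equivalent.
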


\begin{proof}
For the first part of the claim, see \cite[Proposition 3.33]{hatcher}. If the sets $Z_n$ are open, they form an open cover for $X$ (and its compact subsets). Thus in that case each compact set is contained in some $Z_n$.

For the second part of the claim we notice that for every $n\in\mathbb N$ there is an isomorphism $H_i(Z_n)\to H_i(Z_j)$, either induced by inclusion or an inverse of such. The group $H_i(Z_j)$ together with these isomorphisms satisfies the universal property of the direct limit and is thus canonically isomorphic to it. Therefore there is a unique map $H_i(Z_j)\to H_i(X)$ that makes the appropriate diagram commute and by the first part of the claim it is an isomorphism. Since the map $H_i(Z_j)\to H_i(X)$ induced by inclusion makes the mentioned diagram commute, it must be the same map and thus also an isomorphism.
\end{proof}

We proceed to the next step.

\begin{step}
Suppose an interval $[x,y)$ contains no symmetric homological critical values of $f$. Let $\delta\in(0,y-x)$. Then there is a real number $\epsilon\in(0,\delta)$ such that $(i_{x-\epsilon}^{y-})_*$ is an isomorphism.
\end{step}

\begin{proof}
By the preceding step, we may choose for each $n\in\mathbb N$ real numbers $\epsilon_n,\eta_n\in(0,\min\{\delta,\frac1n\})$ such that $(i_{x-\epsilon_n}^{y-\eta_n})_*$ is an isomorphism. Without loss of generality (passing to subsequences twice if necessary) we may assume that the sequences $(\epsilon_n)_n,(\eta_n)_n$ are strictly decreasing. The map $(i_{x-\epsilon_1}^{y-\eta_1})_*$ is an isomorphism, and (by the first step) so are the maps $(i_{y-\eta_n}^{y-\eta_{n+1}})_*$ for $n\in\mathbb N$. All these maps are induced by inclusions and form a direct system as in the hypotheses of Lemma \ref{hatcher}. Note also that the increasing union $X_y^- = \bigcup_{n=0}^\infty X_{y-\eta_n}$ (where we additionally define $\eta_0=y-x+\epsilon_1$, to avoid having to write the space $X_{x-\epsilon_1}$ separately) has the required property that every compact set $K\subseteq X_y^-$ is contained in some $X_{y-\eta_n}$. (This an easy consequence of the fact that the sets $X_{y-\frac{\eta_n}{2}}^-$ form an open cover of $X_y^-$.) This means we can apply Lemma \ref{hatcher} to this direct system, which shows that the claim holds with $\epsilon=\epsilon_1$.
\end{proof}

The last step, of course, is the lemma itself:

\begin{step}
Suppose an interval $[x,y)$ contains no symmetric homological critical values of $f$. Then $(i_{x-}^{y-})_*$ is an isomorphism.
\end{step}

\begin{proof}
Let $\delta\in(0,y-x)$. By the preceding step, we may choose for each $n\in\mathbb N$ a real number $\epsilon_n\in(0,\min\{\delta,\frac1n\})$ such that $(i_{x-\epsilon_n}^{y-})_*$ is an isomorphism. Without loss of generality we may assume that the sequence $(\epsilon_n)_n$ is strictly decreasing. The map $(i_{x-\epsilon_n}^{x-\epsilon_m})_*=(i_{x-\epsilon_m}^{y-})_*^{-1}(i_{x-\epsilon_n}^{y-})_*$ is thus an isomorphism for every pair $n<m$. This means that the maps $(i_{x-\epsilon_n}^{x-\epsilon_m})_*$ form a direct system of induced maps as in the hypotheses of Lemma \ref{hatcher}. Repeating the argument of the preceding step, we see that $(i_{x-\epsilon_n}^{x-})_*$ is an isomorphism for every $n$. This means that $(i_{x-}^{y-})_* = (i_{x-\epsilon_1}^{y-})_*(i_{x-\epsilon_1}^{x-})_*^{-1}$ is an isomorphism. The proof is complete.
\end{proof}

\begin{remark}
As Primo\v{z} \v{S}kraba points out, an easy consequence of this result is that if $\operatorname{hfs}X>0$ (where $\operatorname{hfs}X$ is defined as in \cite{stability}; i.e. the infimum of positive homological critical values of the distance function $d^X:M\to\mathbb R$ defined by $d^X(y)=d(y,X)$) then the inclusion $(d^X)^{-1}(-\infty,\epsilon]\to(d^X)^{-1}(-\infty,\operatorname{hfs}X)$ induces isomorphisms on all homology groups for all $\epsilon\in(0,\operatorname{hfs}X)$.

(We justify this as follows: the inclusion $(d^X)^{-1}(-\infty,\epsilon)\to(d^X)^{-1}(-\infty,\operatorname{hfs}X)$ induces isomorphisms by the symmetric critical value lemma, and the inclusion $(d^X)^{-1}(-\infty,\frac\epsilon2]\hookrightarrow(d^X)^{-1}(-\infty,\epsilon]$ induces isomorphisms by the critical value lemma. By Step \ref{one} of our proof, $(d^X)^{-1}(-\infty,\epsilon)\hookrightarrow(d^X)^{-1}(-\infty,\epsilon]$ also induces isomorphisms. Therefore any map induced by the above mentioned inclusion is a composition of isomorphisms and thus an isomorphism.)
\end{remark}

\section{Conclusions and future work}

We have shown that the definition of homological critical value as stated in \cite{stability} is problematic. We have presented several counterexamples to the critical value lemma under that definition. Nevertheless, we have also seen that a positive result remains valid for open sublevel sets. This suggests that the open sublevel sets have a very nice behaviour when it comes to homology. Noting that every symmetric homological critical value is a homological critical value, but not vice versa, this positive result might open (pun not intended) up new possibilities of studying somewhat more pathological examples of topological spaces.

Namely, a function may possess many homological critical values (making it impossible to apply the critical value lemma), but it may happen that none of these homological critical values are symmetric. In such a case, we may still apply the symmetric version of the critical value lemma to yield something possibly useful.

As a particular example, note that the symmetric critical value lemma holds for each of the counterexamples we have presented, even though the original critical value lemma cannot be used directly. (In each of these cases there is only one homological critical value in the interval under consideration, but one may easily construct examples without symmetric homological critical values, where the set of homological critical values has limit points.)

\section*{Acknowledgements}

The author would like to thank Jo\~ao Pita Costa, Jaka Smrekar and Primo\v{z} \v{S}kraba for numerous helpful discussions and suggestions. This work was funded by the EU project TOPOSYS (FP7-ICT-318493-STREP).

\section*{Appendix}

Here, we prove some of the facts we have used (and omitted from the main sections) in the construction of the smooth counterexample. Mainly, this section is devoted to proving that the functions mentioned are indeed smooth using standard techniques. We first need the following simple lemma.

\begin{lemma}
Let $f:\mathbb R\to\mathbb R$ be continuous everywhere and differentiable for all $x\in\mathbb R\setminus\{0\}$. Suppose $\lim_{x\to 0}f'(x)=A$. Then $f$ is differentiable everywhere and $f'(0)=A$.
\end{lemma}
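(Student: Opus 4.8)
The plan is to compute the derivative of $f$ at $0$ directly from the difference quotient, using the Mean Value Theorem to relate that quotient to values of $f'$ near $0$, and then invoking the hypothesis $\lim_{x\to 0}f'(x)=A$.

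First I would fix $x\neq 0$ and apply the Mean Value Theorem to $f$ on the closed interval with endpoints $0$ and $x$. This is legitimate: $f$ is continuous on all of $\mathbb R$ (hence on that closed interval) and differentiable at every interior point (since the interior does not contain $0$). The Mean Value Theorem then supplies a point $\xi_x$ strictly between $0$ and $x$ with
\begin{equation*}
\frac{f(x)-f(0)}{x}=f'(\xi_x).
\end{equation*}

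Next I would let $x\to 0$. Since $\xi_x$ lies strictly between $0$ and $x$, we have $0<|\xi_x|<|x|$, so $\xi_x\to 0$ as $x\to 0$. By the hypothesis $\lim_{t\to 0}f'(t)=A$, it follows that $f'(\xi_x)\to A$, and therefore the difference quotient $\frac{f(x)-f(0)}{x}$ converges to $A$ as $x\to 0$. By definition this means $f$ is differentiable at $0$ with $f'(0)=A$; combined with the assumed differentiability away from $0$, $f$ is differentiable everywhere.

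There is no real obstacle here; the only point deserving a moment's care is that $\xi_x$ depends on $x$, so one cannot simply "substitute" — one must observe that $\xi_x\to 0$ and use the composition of limits (equivalently, an $\varepsilon$–$\delta$ argument: given $\varepsilon>0$, choose $\delta>0$ so that $|f'(t)-A|<\varepsilon$ for $0<|t|<\delta$; then for $0<|x|<\delta$ we have $0<|\xi_x|<\delta$, hence $\left|\frac{f(x)-f(0)}{x}-A\right|=|f'(\xi_x)-A|<\varepsilon$). This completes the proof.
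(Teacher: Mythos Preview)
Your proof is correct and follows essentially the same route as the paper's: apply the Mean Value Theorem on the interval with endpoints $0$ and $x$, then use an $\varepsilon$--$\delta$ argument to pass to the limit. The only cosmetic difference is that the paper first reduces to the special case $A=0$, $f(0)=0$ and then recovers the general case by subtracting the linear function $f(0)+Ax$, whereas you handle general $A$ directly.
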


\begin{proof}
First, we consider the case where $A=0$ and $f(0)=0$. For every real number $x\neq 0$, the mean value theorem provides us with a real number $\xi_x$ strictly between $0$ and $x$, such that $f'(\xi_x)=\frac{f(x)-f(0)}{x-0} =\frac{f(x)}x$. Let $\epsilon>0$. Since $\lim_{x\to 0}f'(x)=0$, there is $\delta>0$ such that for all $x\in(-\delta,\delta)$ we have $|f'(x)|<\epsilon$. Therefore for $x\in(-\delta,\delta)$ we have $|\frac{f(x)}x|=|f'(\xi_x)|<\epsilon$. Since such a $\delta$ exists for every $\epsilon$, this means precisely that $\lim_{x\to 0}\frac{f(x)-f(0)}{x-0}=0$, so we have proved the special case.

In the general case, we proceed as follows: define a new function $g$ by $g(x) = f(x)-f(0)-Ax$. Then $g$ is continuous everywhere, differentiable on $\mathbb R\setminus\{0\}$, $g(0)=0$ and $\lim_{x\to 0}g'(x)=0$. Therefore $g$ falls into the special case we proved above. This tells us that $g'(0)$ exists and is equal to $0$. But since $f(x)=g(x)+f(0)+Ax$, this means that $f'(0)$ also exists and $f'(0)=A$. This completes the proof.
\end{proof}

Note that this same lemma also applies to functions defined only in a neighborhood of $0$, since continuity and differentiability are local properties. Now, we can prove the following.

\begin{proposition}
The function $f:\mathbb R^2\to\mathbb R$ defined by
\begin{equation*}
f(x,y)=\begin{cases}e^{-\frac1{x^2}}(y-\sin\frac{\pi}x)^2;&x\neq 0\\
0;&x=0
\end{cases}
\end{equation*}
is smooth.
\end{proposition}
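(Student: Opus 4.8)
The plan is to reduce the joint smoothness of $f$ on $\mathbb{R}^2$ to the smoothness of a few one-variable functions, and to obtain the latter by an induction built on the lemma just proved. Away from the line $x=0$ there is nothing to do: on the open set $\{x\neq0\}$ the function $f$ is assembled from $x\mapsto e^{-1/x^2}$, $x\mapsto\sin(\pi/x)$, the coordinate function $y$, and the operations of subtraction and squaring, all of which are smooth there. So the only real issue is smoothness across the line $x=0$.

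Expanding the square, for $x\neq0$ one has $f(x,y)=A(x)y^2-2B(x)y+C(x)$ with $A(x)=e^{-1/x^2}$, $B(x)=e^{-1/x^2}\sin(\pi/x)$ and $C(x)=e^{-1/x^2}\sin^2(\pi/x)$; putting $A(0)=B(0)=C(0)=0$ makes this identity valid on all of $\mathbb{R}^2$, since it then correctly returns $f(0,y)=0$. A polynomial in $y$ whose coefficients are $C^\infty$ functions of $x$ is jointly $C^\infty$ on $\mathbb{R}^2$ --- this follows by an elementary induction on the order of differentiation, because $\partial_x$ and $\partial_y$ carry such polynomials to such polynomials and the base case is mere continuity. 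Hence it suffices to show that $A$, $B$ and $C$ are smooth on $\mathbb{R}$; equivalently, that for $\psi$ equal to each of $1$, $\sin(\pi/x)$, $\sin^2(\pi/x)$, the function $\Phi$ given by $\Phi(x)=e^{-1/x^2}\psi(x)$ for $x\neq0$ and $\Phi(0)=0$ is smooth.

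For this I would introduce the class $\mathcal{S}$ of functions on $\mathbb{R}\setminus\{0\}$ that are finite $\mathbb{R}$-linear combinations of functions $x^{-m}t(x)$ with $m\in\mathbb{Z}_{\ge0}$ and $t$ a finite product of factors among $\sin(\pi/x)$ and $\cos(\pi/x)$ (the empty product being $1$); note $1,\sin(\pi/x),\sin^2(\pi/x)\in\mathcal{S}$ and $|t|\le1$ for each such $t$. Two routine facts do all the work: (i) $\mathcal{S}$ is closed under multiplication by $x^{-3}$ and under $d/dx$, the latter because $\frac{d}{dx}x^{-m}=-mx^{-m-1}$, $\frac{d}{dx}\sin(\pi/x)=-\pi x^{-2}\cos(\pi/x)$, $\frac{d}{dx}\cos(\pi/x)=\pi x^{-2}\sin(\pi/x)$, and the product rule keeps one inside $\mathcal{S}$; (ii) for every $\psi\in\mathcal{S}$ one has $e^{-1/x^2}\psi(x)\to0$ as $x\to0$, since each summand is dominated by $C\,|x|^{-m}e^{-1/x^2}=C\,u^{m/2}e^{-u}\to0$ with $u=1/x^2\to\infty$. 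Then I would prove by induction on $k$ that $\Phi$ (for $\psi\in\mathcal{S}$) is $C^k$, that $\Phi^{(k)}(0)=0$, and that $\Phi^{(k)}(x)=e^{-1/x^2}\psi_k(x)$ off $0$ for some $\psi_k\in\mathcal{S}$. The base case $k=0$ holds by (ii). For the step, on $\mathbb{R}\setminus\{0\}$ the function $\Phi^{(k)}$ is a product of smooth functions, hence differentiable with $(\Phi^{(k)})'=e^{-1/x^2}(\tfrac{2}{x^3}\psi_k+\psi_k')=e^{-1/x^2}\psi_{k+1}$, $\psi_{k+1}\in\mathcal{S}$ by (i); by (ii) its limit at $0$ is $0$, so the lemma just proved, applied to the continuous function $\Phi^{(k)}$, yields $\Phi^{(k+1)}(0)=0$, and $\Phi^{(k+1)}$ is then continuous on $\mathbb{R}$, being $e^{-1/x^2}\psi_{k+1}$ off $0$ and having limit $0=\Phi^{(k+1)}(0)$ at $0$. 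Hence $\Phi\in C^\infty(\mathbb{R})$; applying this to $\psi=1,\sin(\pi/x),\sin^2(\pi/x)$ shows $A,B,C$ are smooth, and the second paragraph then gives that $f$ is smooth.

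I do not expect a genuine obstacle here; the effort is entirely bookkeeping --- setting up $\mathcal{S}$ so that its closure under $d/dx$ and the decay estimate (ii) are transparent, and verifying at each inductive step that the hypotheses of the lemma just proved are met, in particular that $\Phi^{(k)}$ is continuous up to and including $0$ (which again rests on (ii)). The single point that deserves to be stated rather than waved away is the reduction in the second paragraph, that a polynomial in $y$ with smooth $x$-coefficients is jointly smooth on $\mathbb{R}^2$; that is what spares us from estimating the full two-variable Taylor behaviour of $f$ at the origin directly.
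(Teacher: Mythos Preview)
Your proposal is correct and follows essentially the same approach as the paper: expand $f$ as a polynomial in $y$ with coefficients $e^{-1/x^2}$, $e^{-1/x^2}\sin(\pi/x)$, $e^{-1/x^2}\sin^2(\pi/x)$, then prove each coefficient is $C^\infty$ by an induction showing that every derivative has the form $e^{-1/x^2}$ times a controlled expression in $1/x$, $\sin(\pi/x)$, $\cos(\pi/x)$, and apply the preceding lemma at $x=0$. Your class $\mathcal{S}$ is a mild repackaging of the paper's ``polynomials $P,Q$ (or $P,Q,R$)'' formulation that treats all three coefficients uniformly, and you are more explicit than the paper about why a polynomial in $y$ with smooth $x$-coefficients is jointly smooth; otherwise the arguments coincide.
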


\begin{proof}
We first write $f$ in the form $f(x,y) = e^{-\frac1{x^2}}y^2 - 2e^{-\frac1{x^2}}\sin(\frac{\pi}x)y+e^{-\frac1{x^2}}\sin^2\frac{\pi}x$, where $e^{-\frac1{x^2}}$, $e^{-\frac1{x^2}}\sin\frac{\pi}x$ and $e^{-\frac1{x^2}}\sin^2\frac{\pi}x$ are understood to equal $0$ for $x=0$. Under this convention, it suffices to prove that the single variable functions defined by these three expressions are all smooth. The smoothness of the first one is a standard result, and is proven by mathematical induction. We illustrate the method on the second expression. Let $k:\mathbb R\to\mathbb R$ be the function defined by $k(x) = e^{-\frac1{x^2}}\sin\frac{\pi}x$ and $k(0)=0$. The smoothness for $x\neq 0$ follows from analiticity, so it only remains to prove the smoothness for $x=0$. We prove that for each $n\in\mathbb N$ there exist polynomials $P,Q$ such that $k^{(n)}(x)=e^{-\frac1{x^2}}(P(\frac1x)\sin\frac{\pi}x+Q(\frac1x)\cos\frac{\pi}x)$ for $x\neq 0$ and $k^{(n)}(0)=0$. For $n=0$, this is just the definition. (Take $P=1$ and $Q=0$.) Now, suppose we have proved this for some $n$. Then for $n+1$ and $x\neq 0$ we have the following:
\begin{align*}
k^{(n+1)}(x)&=\left(e^{-\frac1{x^2}}\left(P\left(\frac1x\right)\sin\frac{\pi}x+Q\left(\frac1x\right)\cos\frac{\pi}x\right)\right)'\\
&=e^{-\frac1{x^2}}(2x^{-3})\left(P\left(\frac1x\right)\sin\frac{\pi}x+Q\left(\frac1x\right)\cos\frac{\pi}x\right) + e^{-\frac1{x^2}}\bigg(P'\left(\frac1x\right)(-x^{-2})\sin\frac{\pi}x\\
&+P\left(\frac1x\right)\cos\left(\frac{\pi}x\right)(-\pi x^{-2})+Q'\left(\frac1x\right)(-x^{-2})\cos\frac{\pi}x-Q\left(\frac1x\right)\sin\left(\frac{\pi}x\right)(-\pi x^{-2})\bigg)
\end{align*}
This shows that for $n+1$ we may take the polynomials $P^+$ and $Q^+$ defined by $P^+(x)=2x^3P(x)-x^2P'(x)+\pi x^2Q(x)$ and $Q^+(x)=2x^3Q(x)-\pi x^2P(x)-x^2Q'(x)$. Now, note that every function of the form $e^{-\frac1{x^2}}(P(\frac1x)\sin\frac{\pi}x+Q(\frac1x)\cos\frac{\pi}x)$ defined on $\mathbb R\setminus\{0\}$ has a limit as $x\to 0$ and this limit is $0$. (By an easy exercise in real analysis.) Therefore the previous lemma applies to show that $k^{(n+1)}(0)=0$. This completes the induction and proves that $k$ is smooth.

The proof for the other two expressions may be done in a similar manner, for example the derivatives of $e^{-\frac1{x^2}}\sin^2\frac{\pi}x$ can be expressed in the form $e^{-\frac1{x^2}}(P(\frac1x)\sin^2\frac{\pi}x+Q(\frac1x)\sin\frac{\pi}x\cos\frac{\pi}x+R(\frac1x)\cos^2\frac{\pi}x)$ for some polynomials $P,Q,R$.
\end{proof}

\end{document}